\documentclass[11pt,final]{article}
\usepackage{amsmath,amsthm,amsfonts,amssymb,graphicx,hyperref,enumerate,psfrag}
\usepackage{mathrsfs}
\usepackage{fullpage}

\usepackage[utf8]{inputenc} 

\newtheorem{lemma}{Lemma}

\newtheorem{remark}{Remark}

\newtheorem{theorem}[lemma]{Theorem}

\newtheorem{corollary}[lemma]{Corollary}

\newcommand{\EE}{{\mathbb{E}}}

\newcommand{\PP}{{\mathbb{P}}}

\def\rP{{\mathscr{P}}}

\def\rX{{\mathscr{X}}}
\def\rK{{P}}

\newcommand{\dZ}{\mathbb {Z}}

\newcommand{\dR}{\mathbb {R}}

\newcommand{\cW}{\mathcal {W}}

\newcommand{\cX}{\rX}

 %
 % 
 % |1|
 % {1}
 % [1]
 % <1>
 % <1> alternative angle brackets
 % ||1||
 % ||1||
 % Hilbert-Schmidt
 % oscillation
 % (1)
 % (1)
 % (1)

\newcommand{\tmix}{{\rm t}_{\textsc{mix}}}

\newcommand{\dd}{{\rm dist}}

\newcommand{\diam}{{\mathrm{diam}}}
\newcommand{\trel}{\mathrm{t}_{\textsc{rel}}}

\newcommand{\dkl}{\mathrm{d}_{\textsc{kl}}}
\newcommand{\vkl}{\mathscr{V}_{\textsc{kl}}}

\newcommand{\rL}{\mathscr{L}}

\title{Cutoff for non-negatively curved Markov chains}
\author{Justin Salez}
\begin{document}
\maketitle
\begin{abstract}
Discovered  by Aldous, Diaconis and Shahshahani in the context of card shuffling, the cutoff phenomenon has since then been established for a variety of Markov chains. However, proving cutoff remains a delicate affair, which requires a very detailed knowledge of the chain. Identifying the general mechanisms underlying this phase transition, without having to pinpoint its precise location, remains one of the most fundamental open problems in the area of mixing times. In the present paper, we make a step in this direction by establishing cutoff for all Markov chains with non-negative curvature, under a suitably refined product condition. The result applies, in particular,  to the random walk on abelian Cayley expanders satisfying a mild degree assumption, hence  to the random walk on \emph{almost all} abelian Cayley graphs. Our proof relies on a  quantitative \emph{entropic concentration principle}, which we believe to lie behind all cutoff phenomena.
\end{abstract}
\tableofcontents
\section{Introduction}
\subsection{The cutoff phenomenon}
\paragraph{Setup.} Throughout the paper, we consider a stochastic matrix  $\rK$  on a finite state space $\cX$,  and we let $(\rP_t)_{t\ge 0}$ denote the associated heat-kernel, defined for $t\in[0,\infty)$ and $x,y\in \cX$ by
\begin{eqnarray*}
\rP_t(x,y) & := & e^{-t}\sum_{k=0}^\infty \frac{\rK^k(x,y)t^k}{k!}.
\end{eqnarray*}
Any continuous-time Markov semi-group on a finite state space takes this form, after a trivial time scaling ensuring that jumps occur at rate at most $1$.  
As soon as $\rK$ is irreducible, we have  
\begin{eqnarray*}
\rP_t(x,y) & \xrightarrow[t\to\infty]{} & \pi(y),
\end{eqnarray*}
where $\pi=\pi P$ denotes the unique invariant law. The rate at which this convergence occurs is  captured by the so-called \emph{mixing time}, defined for any precision $\varepsilon\in(0,1)$ by 
\begin{eqnarray*}
\tmix(\varepsilon) & := & \min\left\{t\ge 0\colon \max_{x\in\rX}\|\rP_t(x,\cdot)-\pi\|_{\textsc{tv}}\le \varepsilon\right\},
\end{eqnarray*}
where $\|\mu-\pi\|_{\textsc{tv}}=\max_{A\subseteq \cX}|\mu(A)-\pi(A)|$ denotes total-variation distance. Understanding how this fundamental parameter depends on the  underlying transition matrix $P$ constitutes a fascinating area of research, at the crossroad between probability,  discrete geometry, spectral analysis and functional inequalities; see the books \cite{MR3726904,MR2341319} for an introduction. 

\paragraph{The cutoff phenomenon.}Quantifying the approach to equilibrium is of course  particularly relevant when the number of states is large. One is thus naturally led to consider a sequence of irreducible transition matrices  $(\rK_n)_{n\ge 1}$ and investigate the asymptotic behavior of their mixing times as $n\to\infty$. To lighten our exposition, we will completely drop the subscript $n$ from all notations. For many natural chains, a remarkable phase transition known as a \emph{cutoff}  has been established: the distance to equilibrium remains close to  its maximal value for a long time, and then suddenly drops to zero on a much shorter time-scale. More formally, for any fixed $\varepsilon\in(0,1)$, 
\begin{eqnarray}
\label{def:cutoff}
 \frac{\tmix(1-\varepsilon)}{\tmix(\varepsilon)} & = & 1+o(1),
\end{eqnarray}
where $o(1)$ denotes a quantity that vanishes as $n\to\infty$.
The first instances of this phenomenon were discovered  in the 80's by Aldous, Diaconis and Shahshahani in the context of card shuffling  \cite{diaconis1981generating,aldous1983mixing,aldous1986shuffling}. Since then, cutoff has been established on a variety of examples; see in particular the surveys \cite{diaconis1996cutoff,saloff2004random}. The present paper is concerned with the more fundamental problem of singling out abstract conditions under which this phase transition occurs, without having to pinpoint its precise location. This question was raised by Aldous and Diaconis in their seminal works \cite{aldous1983mixing,aldous1986shuffling},  and constitutes one of the most important open problems in the modern study of Markov chains.

\paragraph{The product condition.}
In the {reversible} case (i.e., $\pi(x)P(x,y)=\pi(y)P(y,x)$ for all $x,y\in\rX$),  cutoff  is easily seen to imply the so-called \emph{product condition}:  for each fixed $\varepsilon\in(0,1)$, 
\begin{eqnarray}
\label{def:product}
{\trel} & \ll & {\tmix(\varepsilon)},
\end{eqnarray}
 where the notation $a\ll b$ means $a/b=o(1)$, and where $\trel$ denotes the \emph{relaxation time} (inverse spectral gap) of the matrix $\rK$. The interest of this criterion is that it only involves  orders of magnitude: unlike the definition (\ref{def:cutoff}),  it can be checked without having to determine the precise prefactor in front of  mixing times. In the 2004 AIM workshop on mixing times, Peres \cite{peresamerican} conjectured that (\ref{def:product}) is also sufficient for cutoff. This has been verified for birth-and-death chains \cite{ding2010total} and, more generally, random walks on trees \cite{MR3650406}. Unfortunately, counter-examples have been constructed in the general case; see \cite[Section 6]{chen2008cutoff}. In fact, this failure is \emph{generic}, in the sense that (\ref{def:product}) is stable under a general perturbation which completely destroys cutoff; see  \cite[Example 18.7]{MR3726904}. Thus, the product condition will incorrectly predict cutoff for many natural Markov chains, including certain random walks on abelian groups. 
 
\paragraph{Other criteria.} A few other criteria for cutoff of reversible chains have  been proposed. In particular, Basu, Hermon and Peres provided a formulation in terms of concentration of hitting times \cite{MR3650406}. Unfortunately,  verifying this condition requires determining the precise prefactor in front of hitting times, which, to the best of our knowledge, has not been practically carried out beyond random walks on trees. In the specific context of random walks on regular graphs with fixed degree $d\ge 3$, the delicate \emph{asymptotic Ramanujan property}
\begin{eqnarray*}
\trel & = &  \frac{d}{d- 2\sqrt{d-1}}+o(1),
\end{eqnarray*}
is  known to imply cutoff \cite{2015arXivRamanujan,MR3693771,MR4178418}; see   \cite{bordenave2020cutoff} for a generalization. Note however, that  unlike the product condition (\ref{def:product}), verifying the Ramanujan property \emph{does} require determining the   relaxation time up to a $o(1)$ term, a notoriously challenging task even on random instances  \cite{MR2437174,MR4203039}. Finally, let us mention an impressive series of works by Lubetzky and Sly  \cite{lubetzky2014cutoff,MR3434254,MR3486171, MR3729612} (see also \cite{MR4144084}), which develops a general framework for proving cutoff in certain spin systems at sufficiently high temperature, without having to determine the cutoff location.

\paragraph{Our contribution.} For a broad class of chains known as \emph{non-negatively curved} chains, we provide a sufficient ``product-like'' condition for cutoff, which only involves comparing orders of magnitude of $\trel$ and $\tmix$. Moreover, we do not require reversibility, but only symmetry of the support:
\begin{eqnarray}
\label{assume:support}
\rK(x,y)>0 & \Longleftrightarrow & \rK(y,x)>0.
\end{eqnarray}
 Before stating the result, let us briefly recall the two notions of curvature on which our approach relies: the Bakry-\'Emery curvature, and the Ollivier-Ricci curvature.

\subsection{Non-negative curvature}
The \emph{Ricci curvature} is a far-reaching concept in Riemannian geometry; see e.g. the book \cite{MR3726907}. Over the past decades, considerable efforts have been made to develop a satisfactory analogue in discrete settings, such as graphs and Markov chains. In particular,  Bakry and \'Emery \cite{MR889476,MR941980} proposed a fruitful approach based on $\Gamma-$calculus; see also the book \cite{MR3155209}. More recently, Ollivier \cite{MR2484937} put forward a different definition of curvature based on optimal transport, which applies to arbitrary metric spaces. Our results will apply to both definitions indifferently.

\paragraph{Ollivier-Ricci curvature.} Assuming that the support of $\rK$ is symmetric, we may turn the state space $\cX$ into a metric space by equipping it with the distance
\begin{eqnarray*}
\dd(x,y) & := & \min\left\{k\in\dZ_+\colon \rK^k(x,y)>0\right\}.
\end{eqnarray*}
We use the graph-theoretical notation $x\sim y$ to mean that $\dd(x,y)=1$. 
As any distance, the above metric can be ``lifted'' to the distributional level via optimal transport. Specifically, the $L^1-$Wassertein distance between two probability measures $\mu$ and $\nu$ on $\cX$ is defined as
\begin{eqnarray*}
\cW_1(\mu,\nu) & := & \min_{\gamma\in\Pi(\mu,\nu)}\sum_{x,y\in \cX} \gamma(x,y)\dd(x,y),
\end{eqnarray*}
where $\Pi(\mu,\nu)$ is the set of all \emph{couplings} of $\mu$ and $\nu$, i.e. probability measures on $\cX\times \cX$ whose marginals are $\mu$ and $\nu$. The \emph{Ollivier-Ricci curvature} is the largest number $\kappa$ such that the inequality
\begin{eqnarray*}
\cW_1\left(\rP_t(x,\cdot),\rP_t(y,\cdot)\right) & \le & e^{-\kappa t}\,\dd(x,y),
\end{eqnarray*}
holds for all $t\in\dR_+$ and all $x,y\in \cX$. By Kantorovich duality, this is equivalent to the inequality
\begin{eqnarray}
\label{eq:contract}
\|\rP_tf\|_{\textsc{lip}} & \le & e^{-\kappa t}\|f\|_{\textsc{lip}},
\end{eqnarray}
for all $t\in\dR_+$ and all observables $f\colon\cX\to\dR$, where $\|f\|_{\textsc{lip}}=\max_{x\sim y} {|f(x)-f(y)|}$. In particular, non-negative Ollivier-Ricci curvature simply means that the semi-group is a contraction for the Lipschitz norm. This natural property constitutes the essence of the powerful \emph{path coupling method} \cite{MR2316551}, and its consequences in terms of geometry, mixing,  and concentration of measure  have been massively investigated. The literature is too vast for an exhaustive account, and we refer the reader to the  survey \cite{MR2648269} for details. Establishing non-negative curvature is easier than one may think. Indeed, by convexity of $\cW_1$, it is enough to prove the one-step estimate
\begin{eqnarray*}
\cW_1\left(\rK(x,\cdot),\rK(y,\cdot)\right) & \le & (1-\kappa)\,\dd(x,y).
\end{eqnarray*}
Furthermore, by the triangle inequality, we may restrict our attention to the case where $x\sim y$.

\paragraph{Bakry-\'Emery curvature.} We now turn to Bakry-\'Emery curvature. 
We only introduce the necessary notation, and refer to the beautiful lecture notes \cite{Ramon} for details. Write $\rL=\rK-\rm{Id}$ for the generator of the semi-group $(\rP_t)_{t\ge 0}$. This operator acts on any observable $f\colon \cX\to\dR$ via
\begin{eqnarray*}
(\rL f)(x) & := & \sum_{y\in\cX} \rK(x,y)\left(f(y)-f(x)\right),
\end{eqnarray*}
and the associated \emph{carré du champ} $\Gamma$ is given, for all $f,g\colon \cX\to\dR$, by the formula
\begin{eqnarray*}
\Gamma(f,g)(x) & := & \frac{1}{2}\sum_{y\in\cX}\rK(x,y)\left(f(y)-f(x)\right)\left(g(y)-g(x)\right). 
\end{eqnarray*}
The \emph{Bakry-\'Emery curvature} is then  defined as the largest number $\kappa$ such that the  inequality  
\begin{eqnarray*}
\frac{1}{2}\rL\Gamma(f,f) & \ge & \Gamma(f,\rL f)+\kappa\Gamma(f,f),
\end{eqnarray*}
holds pointwise, for all $f\colon\cX\to \dR$. This functional inequality, often denoted  $\textrm{CD}(\kappa,\infty)$, is equivalent to the following sub-commutativity relation between the carré du champ and the semi-group: 
\begin{eqnarray}
\label{eq:subco}
\forall t\in\dR_+,\quad \Gamma(\rP_t f,\rP_t f) & \le &  e^{-2\kappa t}  \rP_t\Gamma(f,f).
\end{eqnarray}
For consequences in terms of geometry, mixing,  and concentration of measure, see \cite{MR3782987}.
 
\paragraph{Non-negatively curved chains.}  In the discrete setting, there is apparently no known  relation  between the Bakry-\'Emery curvature and the Ollivier-Ricci curvature, although these two notions share many similarities. We emphasize that our results will apply to both definitions indifferently. Thus, by a \emph{non-negatively curved chain}, we will henceforth simply mean a chain that has non-negative curvature in the Bakry-\'Emery sense \emph{or} the Ollivier-Ricci sense. Non-negatively curved chains are ubiquitous, and appear in a broad variety of contexts. Classical examples include:
\begin{itemize}
\item all random walks on abelian groups \cite{MR3492631};
\item all conjugacy-invariant random walks on the permutation group  \cite{MR3646066,MR3936154}; 
\item simple random walks on all Bruhat graphs of Coxeter groups \cite{siconolfi2021ricci};
\item all monotone birth-and-death chains \cite{MR2348750};
\item the zero-range process with non-decreasing rates, and many other particle systems  \cite{MR4091580};
\item the Glauber dynamics for various spin systems at high-temperature \cite{MR3646066}.
\end{itemize}
To complement this list of examples, we note that non-negative curvature is closed under several natural operations, such as composition, superposition and $L^1-$tensorization \cite{MR2484937}.

\subsection{Results and implications}

Recall that $\rK$ is \emph{not} required to be reversible, i.e., to coincide with its adjoint 
\begin{eqnarray*}
\rK^\star(x,y) & := & \frac{\pi(y)P(y,x)}{\pi(x)}.
\end{eqnarray*}
Consequently, we define the relaxation time $\trel$ of the chain to be the inverse spectral gap of the additive reversibilization $\frac{\rK+\rK^\star}{2}$. Equivalently, $\trel$ is the optimal constant in the Poincaré inequality
\begin{eqnarray}
\label{def:poincare}
{\rm\bf Var}\left(f\right) & \le & \trel\,{\bf E}\left[\Gamma(f,f)\right],
\end{eqnarray}
valid for all observables $f\colon\rX\to\dR$, where $ {\bf E}$ and ${\rm \bf Var}$ denote expectation and variance on the finite probability space $(\rX,\pi)$. We also introduce the \emph{sparsity parameter}
\begin{eqnarray*}
\Delta & := & \max\left\{\frac{1}{P(x,y)}\colon x,y\in\rX,x\sim y\right\},
\end{eqnarray*}
which is simply the maximum degree when $P$ is the transition matrix of simple random walk on a graph. To avoid degeneracies, we will assume that $|\rX|\ge 3$, so that
\begin{eqnarray}
\label{assume:trivial}
\trel \ge \frac 12 & \textrm{ and } & \Delta\ge 2.   
\end{eqnarray}
Finally, we recall that the notation $a\ll b$  means that the ratio $a/b$ vanishes as  our implicit parameter $n$ tends to infinity. Similarly, $a \lesssim b$ means that $a/b$ is bounded above uniformly in $n$. We are now ready to state our main  result, in which \emph{non-negative curvature} can be understood either in the Ollivier-Ricci or the Bakry-\'Emery sense, indifferently. 

\begin{theorem}[Main result]\label{th:main} Consider a sequence of  irreducible transitions matrices with symmetric support and non-negative curvature. Suppose that for every fixed $\varepsilon\in(0,1)$, we have
\begin{eqnarray}
\label{assume:sparse}
\tmix(\varepsilon) & \gg & \left(\trel \log \Delta\right)^2.
\end{eqnarray} 
Then,  the sequence  exhibits cutoff. More precisely, for every $\varepsilon\in(0,\frac 12)$, we have
\begin{eqnarray*}
\tmix(\varepsilon)-\tmix(1-\varepsilon) & \lesssim & \sqrt{\tmix(1/4)}\,\trel\log \Delta.
\end{eqnarray*} 
\end{theorem}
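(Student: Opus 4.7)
The strategy is an \emph{entropic concentration} argument. Fix a worst-case initial state $x\in\cX$, run the chain $(X_t)_{t\ge 0}$ from $x$, and consider the log-density and its value along the trajectory:
\begin{equation*}
f_t(y)\;:=\;\log\frac{\rP_t(x,y)}{\pi(y)},\qquad H_t\;:=\;f_t(X_t).
\end{equation*}
Then $\EE[H_t]=D(\rP_t(x,\cdot)\|\pi)=:D_t$, which decreases monotonically from $\log(1/\pi(x))$ down to $0$. The idea is that cutoff amounts to showing that $H_t$ tightly concentrates around $D_t$ and that $D_t$ crosses the concentration scale abruptly.

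\textbf{Step 1 (Threshold sandwich).} I would first derive a two-sided inequality between $\|\rP_t(x,\cdot)-\pi\|_{\textsc{tv}}$ and the fluctuations $\sigma_t:=\sqrt{\VV(H_t)}$ of $H_t$ around its mean $D_t$. The lower bound comes from testing against the half-space $\{f_t>s\}$: the stationary Markov inequality $\pi\{f_t>s\}\le e^{-s}$ (valid since $\EE_\pi[e^{f_t}]=1$) combined with Chebyshev's inequality for $H_t=f_t(X_t)$ gives TV close to $1$ when $D_t$ exceeds a few multiples of $\sigma_t$. A symmetric Chebyshev refinement of Pinsker yields the matching upper bound. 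Thus the mixing window is dictated by $\sigma_t$ at the \emph{entropic time} $t_*:=\inf\{t\colon D_t\le C\sigma_t\}$.

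\textbf{Step 2 (Lipschitz regularity of $f_t$).} By symmetric support, $\rK(y,y')\ge 1/\Delta$ whenever $y\sim y'$. A one-step comparison then yields $\rP_t(x,y)/\rP_t(x,y')\le\Delta$ and a similar estimate for $\pi(y)/\pi(y')$, producing the uniform bound $\|f_t\|_{\textsc{lip}}\lesssim\log\Delta$ for every $t\ge 0$.

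\textbf{Step 3 (Variance bound via curvature).} This is the heart of the proof. I would write $H_t=M_t+A_t$ with $M_t$ a martingale and $A_t$ a predictable drift, exploiting that $s\mapsto\rP_{t-s}(X_s,y)$ is a martingale for each fixed~$y$. The bracket satisfies $\langle M\rangle_t\lesssim \int_0^t \Gamma(f_{t-s},f_{t-s})(X_s)\,ds$. Non-negative curvature enters here: via the Ollivier--Ricci contraction~(\ref{eq:contract}) in one case, or the Bakry--\'Emery sub-commutativity~(\ref{eq:subco}) in the other, one controls the evolution of $\Gamma(f_s,f_s)$, using also the elementary pointwise bound $\Gamma(f_s,f_s)\le\tfrac12\|f_s\|_{\textsc{lip}}^2$. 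Combining with the Poincar\'e inequality~(\ref{def:poincare}) to bring in a $\trel$-factor, I expect a variance estimate of the form
\begin{equation*}
\VV(H_t)\;\lesssim\; t\,(\log\Delta)^2
\end{equation*}
(up to logarithmic corrections and possibly after time-averaging).

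\textbf{Step 4 (Entropy decay and assembly).} A Poincar\'e-type argument lower-bounds the Fisher information $-D_t'$ at the entropic time $t_*$, which is of order $\tmix(1/4)$. Choosing $t_*$ so that $D_{t_*}\asymp\sigma_{t_*}$ and combining Steps 1, 3 and 4, the window is of order $\sigma_{t_*}/|D_{t_*}'|\lesssim\trel\sqrt{\tmix(1/4)}\,\log\Delta$, which is $o(\tmix)$ precisely under the hypothesis $\tmix\gg(\trel\log\Delta)^2$, yielding cutoff.

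\textbf{Main obstacle.} The decisive step is Step 3: extracting a \emph{quantitative} concentration inequality for $H_t$ from \emph{non-negative} (rather than strictly positive) curvature. Positive curvature would grant exponential contraction of the Lipschitz norm along the semigroup and make the fluctuation estimate nearly automatic; with non-negativity alone, $\|f_s\|_{\textsc{lip}}$ may remain of order $\log\Delta$ for all times, so the variance bound must instead be wrestled out of the sub-commutativity relation together with Poincar\'e. This is exactly the ``entropic concentration principle'' advertised in the abstract, and I expect it to require the most delicate analysis of the paper.
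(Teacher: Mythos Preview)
Your overall architecture --- an entropic concentration principle, with a variance bound on $H_t=\log\frac{\rP_t(x,X_t)}{\pi(X_t)}$ feeding into a two-sided threshold argument --- is exactly the paper's. Your Step~3 is also essentially correct: the martingale bracket you write is the probabilistic reading of the identity $\rP_t(f^2)-(\rP_tf)^2=2\int_0^t\rP_{t-s}\Gamma(\rP_sf,\rP_sf)\,ds$ applied to the \emph{frozen} function $f=f_t$, and non-negative curvature then bounds each integrand by $\|f_t\|_{\textsc{lip}}^2$, giving $\VV(H_t)\le 2t\,\|f_t\|_{\textsc{lip}}^2$ with no Poincar\'e needed at this stage. (The $\trel$ enters only in the upper half of Step~1.)

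The genuine gap is Step~2, and you have inverted the difficulty of the proof. The claimed one-step comparison $\rP_t(x,y)/\rP_t(x,y')\le\Delta$ for neighbours $y\sim y'$ is simply false: on a path with $x=y=0$ and $y'=1$, the ratio $\rP_t(0,0)/\rP_t(0,1)\sim 1/t$ blows up as $t\to 0$, and for $t\asymp 1$ the ratio near the far end of the path is exponentially large in the diameter. There is no elementary pointwise comparison giving $\|f_t\|_{\textsc{lip}}\lesssim\log\Delta$ uniformly. The paper's actual argument (its Lemma~\ref{lm:lip}) is the most delicate step: it exploits the Poisson identity $tq(k)=(k+1)q(k+1)$ to rewrite $t\,(P^\star f_t)(y)/f_t(y)$ as a ratio of two $q$-averages over the same path-length variable, then applies Jensen, and the resulting bound is only valid once $t\ge\diam(\rX)/4$; a separate diameter lemma then ensures $\diam\lesssim\tmix(\varepsilon)$. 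So the ``main obstacle'' is not extracting concentration from curvature (that is the short part), but the logarithmic gradient estimate for the heat kernel. Finally, your Step~4 via Fisher information is more fragile than necessary: Poincar\'e does not directly lower-bound $-D_t'$, and the paper sidesteps this entirely by truncating the density to its sub-level set $\{f_t\le 1+2D_t/\varepsilon\}$ and applying the $L^2$ spectral-gap contraction to the conditioned law, which gives $\tmix(\varepsilon)\le t+\trel\,\varepsilon^{-1}(1+D_t)$ in one stroke.
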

Let us now comment on the ``product-like'' condition  (\ref{assume:sparse}). First, it is \emph{effective}, in the sense that its verification only requires comparing the orders of magnitude of $\tmix$ and $\trel$, as promised. Second, it implies the original product condition, by (\ref{assume:trivial}). Third, the presence of an additional sparsity term is unavoidable,  because of the generic counter-example in  \cite[Example 18.7]{MR3726904}. More precisely, let $P$ be the transition matrix  of the random walk with increment law $\mu$ on an abelian group $\rX$ (such chains are non-negatively curved), where the pair $(\rX,\mu)$ is chosen so that  $\trel\lesssim 1\ll \tmix(1/4)$. Then, we can destroy cutoff without affecting this property  by simply replacing $\mu$ with $(1-\theta)\mu+\theta\pi$, where $\theta\in(0,1)$ satisfies $\frac{1}{\tmix(1/4)}\ll \theta\ll 1$. However, this perturbation will drastically increase  the sparsity parameter $\Delta$, and the role of the latter in our condition (\ref{assume:sparse}) is precisely to preclude this type of pathologies. Finally, we emphasize that there is a variety of methods for estimating the orders of magnitude of $\tmix(\varepsilon)$ and $\trel$; see the books \cite{MR3726904,MR2341319}. For example, a simple diameter bound (see Lemma \ref{lm:diam} below) ensures that our ``product-like'' condition (\ref{assume:sparse}) holds whenever
\begin{eqnarray*}
\diam(\rX) & \gg & (\trel\log \Delta)^2.
\end{eqnarray*}
This condition may be further simplified by using the crude estimate $\diam(\rX)\ge \log N/\log\Delta$, where $N=|\rX|$ denotes the number of states. As a consequence, we readily deduce that non-negatively curved chains with reasonably good expansion exhibit cutoff, in the following sense.
\begin{corollary}[Non-negative curvature and expansion imply cutoff]\label{co:main}A sufficient condition for a sequence of non-negatively curved chains with symmetric support to exhibit cutoff is that
\begin{eqnarray}
\label{assume:expand}
 \trel & \ll & 
\frac{\left(\log N\right)^\frac{1}{2}}{\left(\log \Delta\right)^{\frac 32}}.
\end{eqnarray} 
\end{corollary}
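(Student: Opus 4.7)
The plan is to derive this corollary from Theorem~\ref{th:main} by reducing the hypothesis (\ref{assume:expand}) to the product-like condition (\ref{assume:sparse}). Since non-negative curvature and symmetry of support are assumed outright, what remains to check is that for every fixed $\varepsilon\in(0,1)$ one has $\tmix(\varepsilon)\gg (\trel\log\Delta)^2$. As already indicated by the author, the forthcoming Lemma~\ref{lm:diam} will provide the standard diameter lower bound $\tmix(\varepsilon)\gtrsim\diam(\rX)$, so it is enough to show $\diam(\rX)\gg(\trel\log\Delta)^2$.

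The key ingredient is the crude volume-growth estimate $\diam(\rX)\ge \log N/\log\Delta - 1$. I would prove it as follows. Every vertex has at most $\Delta$ out-neighbors in the underlying transition graph, because each positive entry of $\rK$ is at least $1/\Delta$ while the rows sum to $1$. Iterating, the ball of radius $k$ around any $x\in\rX$ contains at most $1+\Delta+\cdots+\Delta^k \le \Delta^{k+1}$ vertices (using $\Delta\ge 2$). Choosing $k=\diam(\rX)$ forces every vertex to lie in this ball, hence $N\le \Delta^{\diam(\rX)+1}$, and taking logarithms yields the claimed bound.

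Combining these two inputs, squaring hypothesis (\ref{assume:expand}) gives $(\trel\log\Delta)^2\ll \log N/\log\Delta$, so
\begin{eqnarray*}
(\trel\log\Delta)^2 & \ll & \frac{\log N}{\log\Delta} \ \lesssim \ \diam(\rX) \ \lesssim \ \tmix(\varepsilon),
\end{eqnarray*}
which is precisely (\ref{assume:sparse}). Theorem~\ref{th:main} then yields cutoff. There is essentially no obstacle in this argument: it is a routine composition of the main theorem with a ball-counting bound and the diameter-to-mixing inequality. The only mild subtlety is asymptotic bookkeeping, namely making sure that the strict $\ll$ in (\ref{assume:expand}) survives the chain of $\lesssim$ comparisons and re-emerges as a genuine $\ll$ at the end, which it does since at most one $\lesssim$ intervenes between the initial $\ll$ and the final target.
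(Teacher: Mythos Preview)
Your proposal is correct and follows exactly the route the paper sketches in the paragraph preceding the corollary: reduce to Theorem~\ref{th:main} by combining the diameter bound of Lemma~\ref{lm:diam} with the ball-counting estimate $\diam(\rX)\gtrsim\log N/\log\Delta$. One small remark: Lemma~\ref{lm:diam} does not literally give $\tmix(\varepsilon)\gtrsim\diam(\rX)$ unconditionally, because of the extra $\sqrt{\trel}$ term; however, under hypothesis~(\ref{assume:expand}) one has $\sqrt{\trel}\lesssim(\trel\log\Delta)^2\ll\log N/\log\Delta\lesssim\diam(\rX)$, so this term is indeed negligible and your chain of inequalities goes through.
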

This result applies to a variety of chains. To illustrate this, let us consider the important special case where $P$ is the transition matrix of simple random walk on a Cayley graph $G={\rm Cay}(\rX,S)$, where $(\rX,+)$ is a finite abelian group and $S\subseteq \rX$ a symmetric set of generators. Specifically,
\begin{eqnarray*}
P(x,y) & = & \frac{1}{|S|}\sum_{z\in S}{\bf 1}_{(y=x+z)}.
\end{eqnarray*}
Write  $N=|\rX|$ for the number of group elements,  and  $d=|S|$ for the number of generators.
\begin{corollary}[Abelian graphs]\label{co:cayley} Random walk on abelian Cayley graphs  exhibits cutoff whenever
\begin{eqnarray*}
\trel & \ll & \frac{\left(\log N\right)^{1/2}}{\left(\log d\right)^{3/2}}.
\end{eqnarray*}
\end{corollary}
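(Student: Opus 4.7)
The plan is to deduce Corollary \ref{co:cayley} directly from Corollary \ref{co:main} by checking that each hypothesis of the latter is automatically satisfied in the Cayley graph setting, and that the sparsity parameter $\Delta$ collapses to the degree $d$.

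First, I would verify that simple random walk on an abelian Cayley graph $G={\rm Cay}(\rX,S)$ falls into the framework of Corollary \ref{co:main}. Symmetry of the support is immediate: by assumption $S=-S$, so $P(x,y)>0$ iff $y-x\in S$ iff $x-y\in S$ iff $P(y,x)>0$. Non-negative curvature is one of the classical examples explicitly listed after the definition (``all random walks on abelian groups''); in fact, the translation invariance of $P$ gives, for any $x\sim y$, the trivial synchronous coupling between $P(x,\cdot)$ and $P(y,\cdot)$ via the common increment $Z\sim\mathrm{Unif}(S)$, yielding $\cW_1(P(x,\cdot),P(y,\cdot))\le \dd(x+Z,y+Z)=\dd(x,y)$, so the Ollivier-Ricci curvature is $\ge 0$.

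Second, I would compute the sparsity parameter. Since $P(x,y)=\frac{1}{|S|}=\frac{1}{d}$ for every edge $x\sim y$ of the Cayley graph, the definition
\begin{eqnarray*}
\Delta & = & \max\left\{\frac{1}{P(x,y)}\colon x\sim y\right\}
\end{eqnarray*}
gives $\Delta=d$ exactly. Hence the condition \eqref{assume:expand} of Corollary \ref{co:main} reads
\begin{eqnarray*}
\trel & \ll & \frac{(\log N)^{1/2}}{(\log d)^{3/2}},
\end{eqnarray*}
which is precisely the hypothesis of Corollary \ref{co:cayley}.

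Finally, I would apply Corollary \ref{co:main} to conclude that the sequence of random walks exhibits cutoff. No step is truly an obstacle here; the only item worth emphasizing is that the equality $\Delta=d$ uses both the uniformity of the increment law over $S$ and the fact that edges of the Cayley graph correspond exactly to generator steps, which is what makes the abstract ``sparsity'' bound \eqref{assume:expand} reduce so cleanly to a condition on the degree of the graph.
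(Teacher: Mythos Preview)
Your proposal is correct and follows exactly the route intended in the paper: Corollary~\ref{co:cayley} is an immediate specialization of Corollary~\ref{co:main}, once one notes that abelian Cayley graphs have symmetric support, non-negative (Ollivier-Ricci) curvature via the synchronous-increment coupling, and sparsity parameter $\Delta=d$. There is nothing to add.
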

In particular, this applies to random instances. Indeed, a celebrated result of Alon \& Roichman \cite{MR1262979}, refined by  Pak \cite{MR1729149}, by Naor \cite{MR2942733}, and finally by Hermon \& Olesker-Taylor  \cite{hermon2021cutoff}, asserts that \emph{almost all Cayley graphs with  $d\ge (1+\varepsilon)\log_2 N$ satisfy $\trel\lesssim 1$}, leading to the following result.

\begin{corollary}[Cutoff on almost all abelian Cayley graphs]Let $G$ be the Cayley graph  obtained by choosing $d$ generators  uniformly at random in an abelian group of size $N$. Consider the regime
\begin{eqnarray*}
d \ge  a\log_2 N & \textrm{ and } & \log d \ll \left(\log N\right)^{\frac 13},
\end{eqnarray*}
where $a>1$ is any fixed constant. Then, the random walk on  $G$ exhibits cutoff with high probability.
\end{corollary}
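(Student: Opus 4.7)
The plan is to reduce the statement directly to Corollary \ref{co:cayley} by combining two ingredients. First, the random walk on any Cayley graph of an abelian group is non-negatively curved (first bullet of the examples list, see \cite{MR3492631}), and has symmetric support as soon as we work with the symmetric closure $S\cup(-S)$ of the random generating set; replacing $S$ by $S\cup(-S)$ only changes $d$ by a factor at most $2$ and so affects none of the log-orders that appear below. Thus, once the relaxation-time hypothesis of Corollary \ref{co:cayley} is verified with high probability, cutoff with high probability follows.

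The second ingredient is the Alon-Roichman estimate in the refined form established by Hermon \& Olesker-Taylor \cite{hermon2021cutoff}: for any fixed $\delta>0$, a uniformly chosen set of $d\ge (1+\delta)\log_2 N$ generators in a finite abelian group produces a Cayley graph whose relaxation time satisfies $\trel\lesssim 1$ with high probability. Under the standing assumption $d\ge a\log_2 N$ with $a>1$ fixed, I may take $\delta=a-1$, so this input yields $\trel\lesssim 1$ with high probability.

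It then remains to check that the deterministic bound
\[
 \trel \;\ll\; \frac{(\log N)^{1/2}}{(\log d)^{3/2}}
\]
required by Corollary \ref{co:cayley} is implied by the combination of $\trel\lesssim 1$ w.h.p. and the regime assumption $\log d\ll(\log N)^{1/3}$. The latter rearranges to $(\log d)^{3/2}\ll (\log N)^{1/2}$, so the right-hand side above diverges, hence indeed dominates any quantity that is merely $\lesssim 1$. Applying Corollary \ref{co:cayley} then yields cutoff with high probability, as required.

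There is no genuine obstacle: the statement is a clean composition of Corollary \ref{co:cayley} (the real content of this paper) with the off-the-shelf relaxation-time estimate of \cite{hermon2021cutoff}. The only point worth being slightly careful about is the symmetrization convention, addressed above, and the verification that the ``with high probability'' events of the two ingredients can be intersected, which is immediate since one of them (non-negative curvature) is deterministic.
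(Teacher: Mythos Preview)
Your argument is correct and is exactly the one the paper uses: invoke the Alon--Roichman/Hermon--Olesker-Taylor bound $\trel\lesssim 1$ w.h.p.\ under $d\ge a\log_2 N$, observe that $\log d\ll(\log N)^{1/3}$ makes $(\log N)^{1/2}/(\log d)^{3/2}\to\infty$, and then apply Corollary~\ref{co:cayley}. Your remark on symmetrization is a fair technical point the paper leaves implicit, but it changes nothing substantive.
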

Note that the requirement $a>1$ can not be improved in general, since the binary group $\dZ_2^d$ can not be generated by less than $d=\log_2N$ elements. 
The problem of establishing cutoff for random abelian Cayley graphs has a long history (see the  survey \cite{MR2121795}). It originates  with a conjecture raised by Aldous and Diaconis in an extended version of \cite{aldous1986shuffling}. The dense regime $d\gg\log N$ was settled several years ago by Dou and Hildebrand \cite{MR1404540,MR1296428}. The sparse regime $d\lesssim \log N$, in contrast,  was tackled only very recently in an impressive series of works by Hermon and Olesker-Taylor  \cite{hermon2021cutoff,nonabelian,hermon2021results,hermon2021geometry}, which additionally provides a very detailed picture of the geometry of random Cayley graphs. We emphasize that those two approaches crucially rely on the fact that the generators are chosen uniformly at random, whereas  our Corollary \ref{co:cayley} deterministically applies to \emph{any} abelian Cayley graph with reasonably good expansion, without requiring any specific computation. 

\begin{remark}[Refinements]\label{rk:refinement1} With some additional knowledge on the chain, our approach can easily be  refined. For example, the ``product-like'' condition (\ref{assume:sparse}) can be replaced with 
\begin{eqnarray*}
\tmix(\varepsilon) & \gg & \frac{\trel\log \Delta}{\sqrt{\kappa}},
\end{eqnarray*}
which is strictly weaker  as soon as  $ 1/\kappa\ll \tmix(\varepsilon)$. See Remark \ref{rk:refinement2} below for details.
\end{remark}

\begin{remark}[Characterization of the cutoff time]\label{rk:location}Our proof also provides an entropic characterization of the cutoff time; see the second part of Theorem \ref{th:cutoff} below.
\end{remark}

\begin{remark}[Bounded degrees] The condition (\ref{assume:sparse}) trivially holds in the bounded-degree expander regime $\max(\Delta,\trel)\lesssim 1\ll |\rX|$. Unfortunately, such chains must have negative curvature  \cite{salez2021sparse}. 
\end{remark}

\paragraph{Proof outline.}
The proof of Theorem \ref{th:main} relies on a certain \emph{entropic concentration phenomenon}, which we formulate in Section \ref{sec:entropy} below. In Section \ref{sec:proof}, we show that this phenomenon implies cutoff along \emph{any} sequence of Markov chains, without any curvature or support assumption. This is inspired by a recent body of works establishing cutoff on random instances \cite{MR2667423,MR3650414,MR3758735,MR3773804,MR3916108,MR4132638,
conchonkerjan2019cutoff,bordenave2020cutoff,hermon2020universality,hermon2021cutoff}, where entropy plays a crucial role (see also \cite{MR4021248,MR4178418}). Our  entropic criterion can be understood as the common  mechanism underlying these generic cutoff phenomena, and hopefully many more to come. Finally, in Section \ref{sec:varentropy}, we establish the entropic concentration phenomenon for non-negatively curved chains satisfying  our ``product-like'' condition. This combines a new gradient estimate for the logarithm of the heat-kernel with a {local} concentration inequality for Lipschitz observables. While connections between curvature and local concentration are well known (see e.g., \cite{MR3726607,MR2683634,Ramon} and the references therein), their application to cutoff seems new and promising.

\paragraph{Acknowledgement.} The author warmly thanks Nathanaël Berestycki, Max Fathi, Jonathan Hermon and Sam Olesker-Taylor  for their valuable comments on a first version of the paper. This work was partially supported by Institut Universitaire de France.
\section{Proof}
\subsection{The entropic concentration phenomenon}
\label{sec:entropy}
\emph{Relative entropy} (or \emph{Kullback-Leibler divergence}) is a natural  measure of  discrepancy  between a probability measure $\mu$ and a (fully supported) reference probability measure $\pi$. It is defined as 
\begin{eqnarray*}
\dkl\left(\mu\|\pi\right) & := & \sum_{x\in \rX}\mu(x)\log\frac{\mu(x)}{\pi(x)},
\end{eqnarray*}
where $\log$ denotes the natural logarithm. 
By the strict convexity of $u\mapsto u\log u$, we always have $\dkl\left(\mu\|\pi\right)\ge 0$, with equality if and only if $\mu=\pi$. The celebrated Pinsker Inequality provides a one-sided quantitative version of this statement, by ensuring that $\|\mu-\pi\|_{\textsc{tv}}$ is small whenever $\dkl\left(\mu\|\pi\right)$ is small. This is the starting point of a variety of powerful upper bounds on mixing times (see, e.g., \cite{MR2283379}). However, the converse relation -- namely, that $\|\mu-\pi\|_{\textsc{tv}}$  has to be large whenever $\dkl\left(\mu\|\pi\right)$ is large --  is much looser, because a very small region with $\frac{\mu}{\pi}$  large can boost the above sum, while being negligible from a total-variation viewpoint. A simple way to preclude this type of pathologies is to have some control on the typical fluctuations of $\frac{\mu}{\pi}$. To do so, it is natural to consider the associated variance, beautifully called \emph{varentropy} in a different context \cite{MR4073681}:
\begin{eqnarray*}
\vkl\left(\mu\|\pi\right) & := & \sum_{x\in \rX}\mu(x)\left(\log\frac{\mu(x)}{\pi(x)}-\dkl\left(\mu\|\pi\right)\right)^2.
\end{eqnarray*}
Coming back to Markov chains, we will consider the worst-case varentropy under the heat-kernel:
\begin{eqnarray*}
\vkl^\star(t) & := & {\max_{o\in\rX}\vkl\left(\rP_t(o,\cdot)\|\pi\right)}.
\end{eqnarray*}
This key quantity turns out to govern the cutoff phenomenon, as formalized in the following  result (proved in Section \ref{sec:proof} below). We emphasize that the latter applies to \emph{any} transition matrix $P$: neither non-negative curvature, nor symmetry of the support is  required. To the best of our knowledge, the use of varentropy in relation with  the cutoff phenomenon is new.
\begin{theorem}[Entropic concentration implies cutoff]\label{th:cutoff}For any stochastic matrix $P$ and any $\varepsilon\in(0,\frac{1}{2})$
\begin{eqnarray*}
\tmix(\varepsilon)-\tmix(1-\varepsilon) & \le & \frac{2\trel}{\varepsilon^2}\left[1+\sqrt{\vkl^\star\left(\tmix\left(1-\varepsilon\right)\right)}\right].
\end{eqnarray*}
In particular, for a sequence of stochastic matrices to exhibit cutoff, it is enough that it satisfies 
\begin{eqnarray}
\label{def:entropic}
1+\sqrt{\vkl^\star\left(\tmix\left(\varepsilon\right)\right)} & \ll & \frac{\tmix(\varepsilon)}{\trel},
\end{eqnarray}
for all $\varepsilon\in\left(\frac 12,1\right)$. Moreover, in that case, we have $\tmix(\varepsilon)\sim t$, where $t$ solves the equation
\begin{eqnarray*}
\dkl^\star(t) & \asymp & 1+\sqrt{\vkl^\star\left(t\right)},
\end{eqnarray*}
with $\dkl^\star(t)=\max_{o\in\rX}\dkl(\rP_t(o,\cdot)\|\pi)$, and with $\asymp$ denoting equality up to any fixed prefactor.
\end{theorem}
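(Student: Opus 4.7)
The plan is to prove two complementary estimates driven by the varentropy, and then show they interlock to control the mixing window. Throughout I write $\mu := \rP_t(o,\cdot)$, $h := \mu/\pi$, $D := \dkl(\mu\|\pi)$, $V := \vkl(\mu\|\pi)$. The core idea is that two-sided concentration of $\log h$ around its mean $D$ with dispersion $\sqrt{V}$ (under $\mu$) simultaneously yields a lower bound on $\|\mu-\pi\|_{\textsc{tv}}$ and controls the effective $\chi^2$-size of an appropriate truncation of $\mu$.

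\textbf{Entropic lower bound on TV.} Under $\mu$, the random variable $\log h$ has mean $D$ and variance $V$, so Chebyshev gives $\mu(\{h \ge e^{D-\alpha}\}) \ge 1 - V/\alpha^2$ for any $\alpha \in (0, D]$, while Markov's inequality under $\pi$ (with $\EE_\pi[h] = 1$) forces $\pi(\{h \ge e^{D-\alpha}\}) \le e^{-(D-\alpha)}$. Differencing yields $\|\mu - \pi\|_{\textsc{tv}} \ge 1 - V/\alpha^2 - e^{\alpha - D}$, and optimizing $\alpha$ gives the contrapositive I will use: \emph{if $\|\mu - \pi\|_{\textsc{tv}} \le 1 - \varepsilon$, then $D \le \sqrt{2V/\varepsilon} + \log(2/\varepsilon)$.} Applied at $t_1 := \tmix(1-\varepsilon)$ and maximized over the starting state, this keeps $\dkl^\star(t_1)$ under control by $\sqrt{\vkl^\star(t_1)}$.

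\textbf{Truncation plus Poincaré upper bound.} For any $\lambda > 0$, split $\mu = \mu\mathbf{1}_B + \mu\mathbf{1}_{B^c}$ with $B := \{h \le e^{D+\lambda}\}$; Chebyshev applied to the other tail of $\log h$ gives $\mu(B^c) \le V/\lambda^2$. The restricted sub-measure $\nu := \mu\mathbf{1}_B$ has $\pi$-density at most $e^{D+\lambda}$, so the mean-zero function $g := h\mathbf{1}_B - \nu(\cX)$ satisfies $\EE_\pi[g^2] \le \EE_\pi[h^2\mathbf{1}_B] \le e^{D+\lambda}$. The Poincaré inequality defining $\trel$ yields the $L^2(\pi)$-contraction $\|\rP_\tau^\star g\|_{L^2(\pi)} \le e^{-\tau/\trel}\|g\|_{L^2(\pi)}$ — valid without reversibility, since $\pi \rP^\star = \pi$ and the Dirichlet form of $\rL^\star$ coincides with that of the additive reversibilization $(\rK+\rK^\star)/2$. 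Cauchy-Schwarz on $\nu \rP_\tau - \nu(\cX)\pi$, plus the discarded tail, delivers
\[
\|\mu \rP_\tau - \pi\|_{\textsc{tv}} \ \le\ \tfrac{1}{2}\, e^{(D + \lambda)/2 - \tau/\trel} + V/\lambda^2.
\]

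\textbf{Combining, cutoff, and location.} Substituting the $D$-bound from the first step into the second at time $t_1$, choosing $\lambda \asymp \sqrt{V/\varepsilon}$, and forcing both error terms below $\varepsilon/2$ yields $\tau \lesssim \trel(1 + \sqrt{\vkl^\star(t_1)})/\varepsilon^2$ as the extra time needed for TV to drop below $\varepsilon$, which is the claimed quantitative window. The cutoff implication is immediate: the hypothesis $1 + \sqrt{\vkl^\star(\tmix(\varepsilon))} \ll \tmix(\varepsilon)/\trel$ for $\varepsilon \in (\tfrac{1}{2}, 1)$ makes the window $o(\tmix)$. The entropic location follows by the same dichotomy — the first estimate forces $\|\mu-\pi\|_{\textsc{tv}}$ close to $1$ whenever $\dkl^\star(t) \gg 1 + \sqrt{\vkl^\star(t)}$ (giving $\tmix(\varepsilon) \gtrsim t$), while the combined two-step argument shows that once $\dkl^\star(t) \lesssim 1 + \sqrt{\vkl^\star(t)}$ only an extra $O(\trel(1 + \sqrt{\vkl^\star(t)})) = o(\tmix)$ suffices to push TV below $\varepsilon$. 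The delicate point is the balance in the second step: $\lambda$ must be \emph{small} enough that the exponential blow-up $e^{(D+\lambda)/2}$ is absorbed by the Poincaré decay $e^{-\tau/\trel}$ within time $\tau=o(\tmix)$, yet \emph{large} enough that the tail $V/\lambda^2$ stays below $\varepsilon$; the first step's \emph{a priori} bound on $D$ at $\tmix(1-\varepsilon)$ is precisely what makes this balance feasible, since without it $D$ could be of order $\log|\cX|$ and the truncation-plus-$\chi^2$ route would simply fail.
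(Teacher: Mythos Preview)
Your argument is correct and shares the paper's two-step architecture: a Chebyshev lower bound on total variation coming from concentration of $\log h$ under $\mu$, followed by a density truncation plus $L^2(\pi)$ contraction via Poincar\'e. The one substantive difference lies in the truncation step. You cut at threshold $e^{D+\lambda}$ and bound the discarded mass $\mu(B^c)\le V/\lambda^2$ by a \emph{second} application of Chebyshev, so your ``upper-bound lemma'' already involves the varentropy. The paper instead truncates at level $e^{1+2D/\varepsilon}$ and controls the tail using \emph{only} the entropy $D$, via the elementary inequality
\[
\Bigl(1+\tfrac{2D}{\varepsilon}\Bigr)\,\mu(A^c)\ \le\ \sum_{x\in A^c}\mu(x)\log\tfrac{\mu(x)}{\pi(x)}\ \le\ D+\mu(A^c)
\]
(the last step from $\log u\le u-1$ on $A$), which gives $\mu(A^c)\le\varepsilon/2$ directly. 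This yields a standalone estimate $\tmix(\varepsilon)\le t+\tfrac{\trel}{\varepsilon}\bigl(1+\dkl^\star(t)\bigr)$ that is independent of $\vkl^\star$ and, after combining with the lower bound, delivers the explicit constant $2/\varepsilon^2$. Your route is arguably more symmetric---Chebyshev for both tails of $\log h$---and is entirely adequate for the cutoff and location conclusions, at the price of an implicit constant in place of the paper's exact~$2$, and of tying the upper-bound step to the varentropy when in fact it does not need it.
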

 We naturally  call (\ref{def:entropic}) the \emph{entropic concentration phenomenon}. Observe that the latter readily implies the product condition (\ref{def:product}). Of course, to make our criterion effective, we need to complement it with an estimate on the varentropy $\vkl^\star$. This is precisely the aim of our second key result, established in Section \ref{sec:varentropy} below, and which crucially exploits non-negative curvature.
\begin{theorem}[Varentropy estimate]\label{th:varentropy}Consider a sequence of   non-negatively curved transitions matrices with symmetric support. Fix $\varepsilon\in(0,1)$, and suppose that 
$
\sqrt{\trel}\ll  \tmix(\varepsilon).
$
Then, 
\begin{eqnarray*}
\vkl^\star(\tmix(\varepsilon)) & \lesssim & \tmix(\varepsilon)\left(\log\Delta\right)^2.
\end{eqnarray*}
\end{theorem}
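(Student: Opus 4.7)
The strategy is to bound the worst-case varentropy
$\vkl^\star(t)=\max_{o}\VV_{P_t(o,\cdot)}\bigl(\log(P_t(o,\cdot)/\pi)\bigr)$
at $t=\tmix(\varepsilon)$ by combining two ingredients: (i) a \emph{local concentration inequality} turning non-negative curvature into a Poincar\'e-type variance bound along the heat semigroup, and (ii) a \emph{gradient estimate} showing that the log-density $\log(P_t(o,\cdot)/\pi)$ has Lipschitz norm $O(\log\Delta)$ in the graph metric.

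For (i), the plan is to establish: under either definition of non-negative curvature, for any observable $f\colon\cX\to\dR$ and any starting state $o$,
$$\VV_{P_t(o,\cdot)}(f)\;\le\;t\,\|f\|_{\textsc{lip}}^{2}.\qquad(\dagger)$$
The starting point is the identity
$$\VV_{P_t(o,\cdot)}(f)\;=\;2\int_{0}^{t}\bigl(P_s\,\Gamma(P_{t-s}f,P_{t-s}f)\bigr)(o)\,ds,$$
obtained by integrating $\partial_{s}P_s((P_{t-s}f)^2)(o)$ and invoking $L(g^2)-2gLg=2\Gamma(g,g)$. Non-negative curvature then bounds the integrand pointwise by $\tfrac12\|f\|_{\textsc{lip}}^{2}$: in the Bakry-\'Emery case, via the sub-commutativity $\Gamma(P_{r}f)\le P_{r}\Gamma(f)$ combined with the universal inequality $\Gamma(f)(x)\le\tfrac12\|f\|_{\textsc{lip}}^{2}$; in the Ollivier case, via $\|P_{r}f\|_{\textsc{lip}}\le\|f\|_{\textsc{lip}}$, which directly yields $\Gamma(P_{t-s}f)(x)\le\tfrac12\|f\|_{\textsc{lip}}^{2}$. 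Either way, integration gives $(\dagger)$.

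For (ii), I will prove, for $t$ above a fixed constant,
$$\bigl\|\log(P_t(o,\cdot)/\pi)\bigr\|_{\textsc{lip}}\;\le\;C\log\Delta\qquad(\ddagger)$$
for some absolute $C$. Writing $\log(P_t(o,\cdot)/\pi)=\log P_t(o,\cdot)-\log\pi$, the $\log\pi$ term is immediate from the invariance $\pi=\pi P$: for $y\sim y'$, one has $\pi(y)\ge\pi(y')P(y',y)\ge\pi(y')/\Delta$, hence $\|\log\pi\|_{\textsc{lip}}\le\log\Delta$. For the $\log P_t(o,\cdot)$ term, I use the Poisson representation $P_t=e^{-t}\sum_{k}t^{k}P^{k}/k!$ together with the path-extension inequality $P^{k+1}(o,y')\ge P^{k}(o,y)P(y,y')\ge P^{k}(o,y)/\Delta$ (valid whenever $y\sim y'$, by appending the single step $y\to y'$) to compare the Poisson sums defining $P_t(o,y)$ and $P_t(o,y')$; managing the resulting index shift via the elementary bound $P_\delta(z,z)\ge e^{-\delta}$ then gives $\|\log P_t(o,\cdot)\|_{\textsc{lip}}\le\log\Delta+O(1)$ uniformly in $t$ above a small threshold.

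Combining the two ingredients, $(\dagger)$ applied to $f=\log(P_t(o,\cdot)/\pi)$ and $(\ddagger)$ together give
$$\vkl^\star(t)\;\le\;t\,(C\log\Delta)^{2}\;\lesssim\;t(\log\Delta)^{2}$$
at $t=\tmix(\varepsilon)$, which is the desired bound. The hypothesis $\sqrt{\trel}\ll\tmix(\varepsilon)$, in combination with $\trel\ge\tfrac12$ from (\ref{assume:trivial}), forces $\tmix(\varepsilon)\to\infty$, ensuring that the small-$t$ threshold of $(\ddagger)$ is eventually cleared. The delicate step I anticipate is $(\ddagger)$: the Poisson-series comparison most naturally yields a bound between $P_t(o,y)$ and a \emph{time-shifted} quantity like $P_{t-\delta}(o,y')$, rather than between $P_t(o,y)$ and $P_t(o,y')$ directly, and converting this into a clean same-time Lipschitz estimate requires either careful bookkeeping of the Poisson weights or, failing that, an averaged and semigroup-smoothed surrogate of $(\ddagger)$ still strong enough to close $(\dagger)$. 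This is the step where non-negative curvature is most likely to re-enter the argument in a subtler way.
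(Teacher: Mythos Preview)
Your overall strategy—local concentration $(\dagger)$ plus a Lipschitz estimate $(\ddagger)$ on the log-density—is exactly the paper's, and your proof of $(\dagger)$ is correct (in fact slightly sharper than the paper's version, by a factor of $2$). The gap is entirely in $(\ddagger)$.

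First, the claim that $(\ddagger)$ holds for $t$ above a \emph{fixed} constant is false. On the path $\{0,\dots,n\}$ with $o=0$ and $t$ bounded, the ratio $P_t(0,n)/P_t(0,n-1)$ behaves like $t/n\to 0$, so $\|\log P_t(o,\cdot)\|_{\textsc{lip}}$ is unbounded as $n\to\infty$ for any fixed $t$. The correct threshold scales with $\diam(\rX)$: the paper proves $(\ddagger)$ only for $t\ge\diam(\rX)/4$. Consequently, the role of the hypothesis $\sqrt{\trel}\ll\tmix(\varepsilon)$ is not merely to send $\tmix(\varepsilon)\to\infty$, but—via a separate diameter bound $\diam(\rX)\le 2\tmix(\varepsilon)+O\bigl(\sqrt{\tmix(\varepsilon)}\bigr)+O\bigl(\sqrt{\trel}\bigr)$—to guarantee $\tmix(\varepsilon)\ge\diam(\rX)/4$ eventually. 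Your use of the hypothesis therefore misses its actual purpose.

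Second, your path-extension argument for $(\ddagger)$ does not close, as you essentially acknowledge. The step $P^{k+1}(o,y')\ge P^k(o,y)/\Delta$ yields, after Poisson summation, only $P_t(o,y')\ge\frac{c}{\Delta}P_{t-\delta}(o,y)$; converting this to a same-time estimate would require $P_t(o,y)\le C\,P_{t-\delta}(o,y)$, and the bound $P_\delta(z,z)\ge e^{-\delta}$ gives the opposite inequality. The paper takes a genuinely different route: writing $f=P_t(o,\cdot)/\pi$, it observes via the identity $tq(k)=(k+1)q(k+1)$ that $t(P^\star f)(x)/f(x)$ is the mean of $k$ under the probability measure proportional to $q(k)P^k(o,x)$, bounds this mean by Jensen's inequality applied to the moment generating function, and obtains
\[
\max_{y\sim x}\frac{f(y)}{f(x)}\ \le\ \Delta(P^\star)\Bigl(e-1+\frac{1}{t}\log\Delta(P_t)\Bigr).
\]
The factor $\Delta(P_t)=\max_{o,x}1/P_t(o,x)$ is then controlled by a laziness argument once $t\ge\diam(\rX)/4$. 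This adjoint-and-Jensen manoeuvre is the missing idea; curvature does not re-enter in $(\ddagger)$, but the diameter does.
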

When combined together,  Theorems \ref{th:cutoff} and \ref{th:varentropy}  readily imply Theorem \ref{th:main}. The remainder of the paper is  thus devoted to the proof of these two results.

\subsection{Entropic concentration implies cutoff}
\label{sec:proof}
In this section, we prove that entropic concentration implies cutoff, as stated in Theorem \ref{th:cutoff}. To do so, we need a sharp, two-sided quantitative relation between entropy and mixing. We start with the following upper bound, which shows that mixing occurs quickly once relative entropy is small.

\begin{lemma}[Entropic upper-bound]\label{lm:upper}For all $t\in\dR_+$ and $\varepsilon\in(0,1)$,
 \begin{eqnarray*}
\tmix(\varepsilon) & \le & t+ \frac{\trel}{\varepsilon}\left( 1+\dkl^\star\left(t\right)\right).
\end{eqnarray*}
\end{lemma}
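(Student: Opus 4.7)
The plan is to combine an entropic truncation of the density with the $L^2$-contraction provided by the spectral gap. Fix a starting point $o\in\rX$, set $\mu:=\rP_t(o,\cdot)$ and write $f:=\mu/\pi$ for its density, so that $\dkl(\mu\|\pi)\le \dkl^\star(t)=:D$. The goal is to show that after an additional time $r$ of order $\trel(1+D)/\varepsilon$, the distance $\|\mu \rP_r-\pi\|_{\textsc{tv}}$ falls below $\varepsilon$.

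\emph{Step 1: entropic truncation.} The density $f$ may be huge at a few exceptional points, but the entropy $D$ forces the $\mu$-mass there to be small. Splitting the sum defining $D=\sum_x\pi(x)f(x)\log f(x)$ according to whether $f<1$, $1\le f\le K$, or $f>K$, and using $u\log u\ge u-1\ge-1$ on $\{f<1\}$ together with $\log f\ge\log K$ on $\{f>K\}$, one obtains a Markov-type tail estimate
\begin{eqnarray*}
\mu\bigl(\{f>K\}\bigr) & \le & \frac{D+O(1)}{\log K},
\end{eqnarray*}
valid for every $K>1$. Accordingly, decompose $\mu=\mu_1+\mu_2$ with $\mu_1:=\mu|_{\{f\le K\}}$ (density bounded by $K$) and $\mu_2:=\mu|_{\{f>K\}}$ (total mass $\theta:=\mu(\{f>K\})$).

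\emph{Step 2: $L^2$-contraction and combination.} The crucial spectral input is the Poincaré inequality \eqref{def:poincare}. Because the Dirichlet form $\mathbf{E}[\Gamma(h,h)]$ is symmetric in $\rK$ and $\rK^\star$, it coincides with that of the additive reversibilization, so standard semigroup arguments upgrade \eqref{def:poincare} into an $L^2$-contraction for the forward heat-kernel:
\begin{eqnarray*}
\chi^2(\nu\rP_r\|\pi) & \le & e^{-2r/\trel}\,\chi^2(\nu\|\pi),
\end{eqnarray*}
where $\chi^2(\nu\|\pi)=\mathbf{Var}(d\nu/d\pi)$. Apply this to the normalization of $\mu_1$, whose density is $\le K/(1-\theta)$ and hence has $\chi^2\le K/(1-\theta)$; combine with $\|\cdot\|_{\textsc{tv}}\le\tfrac12\sqrt{\chi^2}$ and the trivial estimate $\|\mu_2\rP_r-\theta\pi\|_{\textsc{tv}}\le\theta$ through the triangle inequality to arrive at
\begin{eqnarray*}
\|\mu\rP_r-\pi\|_{\textsc{tv}} & \le & \theta+\tfrac12\sqrt{K}\,e^{-r/\trel}.
\end{eqnarray*}

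\emph{Step 3: optimization and main obstacle.} Choosing $K$ so that $\theta\le\varepsilon/2$ (which requires $\log K\gtrsim (1+D)/\varepsilon$) and then $r$ so that the second term is also $\le\varepsilon/2$ (which requires $r\gtrsim \trel\log(\sqrt K/\varepsilon)$) gives, using $\log(1/\varepsilon)\le 1/\varepsilon$ on $(0,1)$, a bound of the claimed form $\tmix(\varepsilon)\le t+C\trel(1+D)/\varepsilon$. The genuinely delicate point is constant-tracking: this route produces the correct order of magnitude but only an $O(1)$ prefactor, whereas the lemma asserts the sharp coefficient $1$. Achieving this sharpness requires a finer optimization, most likely a weighted or layered truncation in Step 1 that avoids the wasteful $+O(1)$ term. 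A secondary subtlety, indispensable since $\rK$ need not be reversible, is precisely the identification of $\mathbf{E}[\Gamma(h,h)]$ with the Dirichlet form of $(\rK+\rK^\star)/2$, which is what allows the Poincaré inequality to drive $L^2$-contraction of the forward semigroup.
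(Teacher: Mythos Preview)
Your approach is the paper's: truncate the density at an entropy-dictated threshold, then let the spectral gap clean up the bounded part. The paper uses the cruder $L^\infty$ bound in place of your $\chi^2$, but this is cosmetic since $\chi^2(\nu\|\pi)\le\|d\nu/d\pi\|_\infty$ for probability measures.

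Your worry about the sharp constant is misplaced: no ``weighted or layered truncation'' is needed, only a tighter version of your own Step~1 tail estimate. Instead of bounding the $\{f<1\}$ contribution to $D$ by $-1$ and discarding the $\{1\le f\le K\}$ part, the paper sets $A=\{\log f<1+2D/\varepsilon\}$ and computes in one line
\[
\Bigl(1+\tfrac{2D}{\varepsilon}\Bigr)\mu(A^c)\ \le\ \sum_{x\in A^c}\mu(x)\log f(x)\ =\ D-\sum_{x\in A}\mu(x)\log f(x)\ \le\ D+\pi(A)-\mu(A)\ \le\ D+\mu(A^c),
\]
the second inequality being $\log u\le u-1$ applied to $u=\pi(x)/\mu(x)$ on $A$. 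This yields $\mu(A^c)\le\varepsilon/2$ with no additive slack. The conditioned measure $\widehat\mu=\mu(\cdot\,|A)$ then satisfies $\|\widehat\mu/\pi\|_\infty\le e^{2+2D/\varepsilon}$, and the choice $s=\trel(1+D)/\varepsilon$ gives
\[
\|\widehat\mu\rP_s-\pi\|_{\textsc{tv}}\ \le\ \tfrac12\,e^{1+D/\varepsilon-s/\trel}\ =\ \tfrac12\,e^{1-1/\varepsilon}\ \le\ \tfrac{\varepsilon}{2},
\]
since $1-1/\varepsilon\le\log\varepsilon$ on $(0,1)$. So the ``genuinely delicate'' constant-tracking reduces to replacing your crude three-way split by the single inequality $\log u\le u-1$ on the good set.
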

\begin{proof}Recall that the relaxation time can be used to bound the total-variation distance to equilibrium via the following classical inequality: for any law $\mu$ on $\rX$  and any time $s\in\dR_+$,
\begin{eqnarray}
\label{gap:contraction}
\left\|\mu \rP_s-\pi\right\|_{\textsc{tv}} & \le & \frac{e^{-s/\trel\, }}{2} \sqrt{\left\|\frac{\mu}{\pi}\right\|_\infty};
\end{eqnarray}
see, e.g., \cite{MR2341319}. 
Now fix a law $\mu$ on $\rX$, and consider the set $A\subseteq \rX$ defined by 
\begin{eqnarray*}
A& := & \left\{x\in\rX\colon \log\frac{\mu(x)}{\pi(x)} < 1+\frac{2\dkl\left(\mu\|\pi\right)}{\varepsilon}\right\}.
\end{eqnarray*}
Observe that by definition,
\begin{eqnarray*}
\left(1+\frac{2\dkl\left(\mu\|\pi\right)}{\varepsilon}\right) \mu(A^c) & \le & \sum_{x\in A^c}\mu(x)\log\frac{\mu(x)}{\pi(x)} \\& = & \dkl\left(\mu\|\pi\right)+\sum_{x\in A}\mu(x)\log\frac{\pi(x)}{\mu(x)} \\ & \le &  \dkl\left(\mu\|\pi\right)+\pi(A)-\mu(A)\\
& \le  & \dkl\left(\mu\|\pi\right) +\mu(A^c)
\end{eqnarray*}
where at the third line we have used $\log u\le u-1$. After simplification, we are left with 
\begin{eqnarray*}
\mu(A^c) & \le & \frac{\varepsilon}{2}.
\end{eqnarray*}
Now, let $\widehat{\mu}:=\mu(\cdot|A)$ be $\mu$ conditioned on $A$. Note that
\begin{eqnarray*}
\left\|\frac{\widehat{\mu}}{\pi}\right\|_\infty& 
 = & \frac{1}{\mu(A)}\max_{x\in A}\frac{\mu(x)}{\pi(x)} \ \le \ \exp\left\{2+\frac{2\dkl\left(\mu\|\pi\right)}{\varepsilon}\right\},
\end{eqnarray*}
because $\mu(A)\ge 1/2 \ge 1/e$. Consequently, (\ref{gap:contraction}) applied to $\widehat\mu$ yields
\begin{eqnarray*}
\left\|\widehat{\mu}\rP_s-\pi\right\|_{\textsc{tv}} & \le & \frac{1}{2}\exp\left\{1+\frac{\dkl\left(\mu\|\pi\right)}{\varepsilon}-\frac{s}{\trel}\right\},
\end{eqnarray*}
for all $s\ge 0$. To make the right-hand side less than $\varepsilon/2$, we choose
\begin{eqnarray}
\label{def:s}
s & := & \frac{\trel}{\varepsilon}\left({1+\dkl(\mu\|\pi)}\right).
\end{eqnarray}
On the other hand, we trivially have
\begin{eqnarray*}
\|\widehat{\mu} \rP_s-\mu \rP_s\|_{\textsc{tv}} & \le & \|\widehat{\mu}-\mu \|_{\textsc{tv}}  \ = \ \mu(A^c) \ \le \ \frac{\varepsilon}{2}.
\end{eqnarray*}
By the triangle inequality, we conclude that for $s$ as in (\ref{def:s}),  
\begin{eqnarray*}
\|{\mu} \rP_s-\pi\|_{\textsc{tv}} & \le & \varepsilon.
\end{eqnarray*}
Since $\mu$ is arbitrary, we may take $\mu=\rP_t(o,\cdot)$, and then maximize over  $o\in\rX$ to conclude. 
\end{proof}
To complement the above estimate, we now provide a lower bound showing that mixing can not occur until the relative entropy has reached a sufficiently low level.
\begin{lemma}[Entropic lower-bound]\label{lm:lower}For any probability measure $\mu$ on $\rX$ and any $\varepsilon\in(0,1)$, 
\begin{eqnarray*}
 \|\mu-\pi\|_{\textsc{tv}} \le 1-\varepsilon & \Longrightarrow &
\dkl\left(\mu\|\pi\right)   \le  \frac{1+\sqrt{\vkl\left(\mu\|\pi\right)}}{\varepsilon}  .
\end{eqnarray*}
\end{lemma}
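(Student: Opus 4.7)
\textbf{Proof plan for Lemma~\ref{lm:lower}.} Write $L(x):=\log\frac{\mu(x)}{\pi(x)}$, so that $D:=\dkl(\mu\|\pi)=\EE_\mu[L]$ and $V:=\vkl(\mu\|\pi)=\VV_\mu(L)$. The goal is to exhibit an event on which $\mu$ and $\pi$ disagree by a large amount whenever $D$ is much larger than $1/\varepsilon$ and $\sqrt{V}/\varepsilon$. The natural candidate is a super-level set of the log-density, namely $A_c:=\{x\in\rX:L(x)\ge c\}=\{x:\mu(x)\ge e^c\pi(x)\}$ for a threshold $c$ to be chosen.

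The backbone of the argument is a pair of one-line estimates. On the one hand, by the very definition of $A_c$, each $x\in A_c$ satisfies $\pi(x)\le e^{-c}\mu(x)$, so summing yields the ``lower density'' bound
\begin{eqnarray*}
\|\mu-\pi\|_{\textsc{tv}} \;\ge\; \mu(A_c)-\pi(A_c) \;\ge\; (1-e^{-c})\,\mu(A_c),
\end{eqnarray*}
which is useful provided $c\ge 0$. On the other hand, since $L$ has mean $D$ and variance $V$ under $\mu$, Chebyshev's inequality yields $\mu(|L-D|\le t\sqrt{V})\ge 1-1/t^2$ for any $t>0$, which in turn gives $\mu(A_c)\ge 1-1/t^2$ as soon as we take $c\le D-t\sqrt{V}$.

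The plan is now to pick the parameters $c$ and $t$ so as to recover the stated bound. If $D\le \sqrt{V}/\varepsilon$, the conclusion is immediate; otherwise, I would set $t:=1/\varepsilon$ and $c:=D-t\sqrt{V}>0$. Combining the two one-line estimates and using the hypothesis $\|\mu-\pi\|_{\textsc{tv}}\le 1-\varepsilon$ then gives
\begin{eqnarray*}
1-\varepsilon \;\ge\; (1-\varepsilon^2)(1-e^{-c}),
\end{eqnarray*}
which rearranges to $e^{-c}\ge \varepsilon/(1+\varepsilon)$, i.e.\ $c\le \log\frac{1+\varepsilon}{\varepsilon}$. Using the elementary inequality $\log u\le u-1$ with $u=(1+\varepsilon)/\varepsilon$ gives $c\le 1/\varepsilon$, and substituting back $c=D-\sqrt{V}/\varepsilon$ produces the claimed bound $D\le (1+\sqrt{V})/\varepsilon$.

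The argument is essentially a quantitative version of Markov's inequality applied to the log-density, so there is no serious obstacle; the only delicate point is the calibration of the two parameters $t$ and $c$. The wrong scaling (for instance $t=\sqrt{2/\varepsilon}$, which is the most natural Chebyshev choice) yields a bound of the form $D\le \log(2/\varepsilon)+\sqrt{2V/\varepsilon}$, which is weaker than what is stated by a factor $1/\sqrt{\varepsilon}$ in the varentropy term. Choosing $t=1/\varepsilon$ instead tightens the constant at the expense of having to invoke the logarithmic bound $\log\frac{1+\varepsilon}{\varepsilon}\le 1/\varepsilon$, which is the one non-obvious step. I would also briefly check that the borderline case $D\le \sqrt{V}/\varepsilon$ (where the threshold $c$ would be non-positive and the level-set argument degenerate) is handled trivially, so that the stated inequality holds unconditionally.
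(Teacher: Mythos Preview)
Your proposal is correct and follows essentially the same route as the paper: the paper chooses the threshold $\theta=D-\sqrt{V}/\varepsilon$ (your $c$ with $t=1/\varepsilon$), obtains $\mu(A)\ge 1-\varepsilon^2$ via Chebyshev and $\pi(A)\le e^{-\theta}\mu(A)$ from the definition, and then derives $1-\varepsilon\ge(1-\varepsilon^2)(1-e^{-\theta})$, i.e.\ $\theta\le\log(1+1/\varepsilon)\le 1/\varepsilon$. Your explicit handling of the degenerate case $D\le\sqrt V/\varepsilon$ is a welcome addition, since the paper's displayed chain $\mu(A)-\pi(A)\ge(1-\varepsilon^2)(1-e^{-\theta})$ tacitly uses $\theta\ge 0$.
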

\begin{proof}
Consider the event $A\subseteq\rX$ defined by
\begin{eqnarray*}
A   :=   \left\{x\in \rX\colon  \mu(x) \ge \pi(x)e^\theta  \right\},& \textrm{ with } & \theta = \dkl(\mu\|\pi)-\frac{\sqrt{\vkl(\mu\|\pi)}}{\varepsilon}.
\end{eqnarray*}
Since  $\log \frac{\mu}{\pi}$ has mean $\dkl(\mu\|\pi)$ and variance $\vkl(\mu\|\pi)$ under $\mu$, Chebychev's inequality implies
\begin{eqnarray*}
\mu(A) & \ge &  1-\varepsilon^2.
\end{eqnarray*}
On the other hand, the definition of $A$ readily implies 
\begin{eqnarray*}
\pi(A) & \le & e^{-\theta}\mu(A).
\end{eqnarray*}
Together, these two inequalities imply 
\begin{eqnarray*}
\mu(A)-\pi(A) & \ge & (1-\varepsilon^2)(1-e^{-\theta}).
\end{eqnarray*}
Assuming that $\|\mu-\pi\|_{\textsc{tv}}\le 1-\varepsilon$, we deduce that $1-\varepsilon\ge (1-\varepsilon^2)(1-e^{-\theta})$, or equivalently,
\begin{eqnarray*}
\theta & \le & \log\left(1+\frac{1}{\varepsilon}\right).
\end{eqnarray*}
Since $\log(1+u)\le u$, this implies $\theta\le 1/\varepsilon$, as desired.  
\end{proof}
With these lemmas at hand, the proof of Theorem \ref{th:cutoff} is now straightforward.
\begin{proof}[Proof of Theorem \ref{th:cutoff}]Fix  $\varepsilon\in(0,1)$ and set $t=\tmix(1-\varepsilon)$. By Lemma \ref{lm:upper}, we have
 \begin{eqnarray*}
\tmix(\varepsilon)  & \le & t+ \frac{\trel}{\varepsilon}\left( 1+\dkl^\star\left(t\right)\right).
\end{eqnarray*}
On the other hand, Lemma \ref{lm:lower} with $\mu=\rP_t(o,\cdot)$ (followed by a maximization over $o)$ forces 
\begin{eqnarray*}
\dkl^\star\left(t\right) & \le & \frac{1+\sqrt{\vkl^\star(t)}}{\varepsilon}.
\end{eqnarray*}
Reinserting this above and using $\varepsilon\le 1$ immediately yields the desired claim.
\end{proof}

\subsection{Non-negative curvature implies entropic concentration}
\label{sec:varentropy}
In this section, we prove the general varentropy estimate for non-negatively curved chains stated in Theorem \ref{th:varentropy}.  Our starting point is the following local concentration inequality for Lipschitz observables. The term \emph{local} here refers to the fact that the underlying measure is the heat-kernel itself, rather than the equilibrium measure $\pi$. Let $\kappa$ denote the curvature of the chain, in either the Ollivier-Ricci or the Bakry-\'Emery sense. This is the only place where  curvature is used.  
\begin{lemma}[Local concentration inequality]\label{lm:concentration}For any  $f\colon \rX\to \dR$ and $t\in\dR_+$,
\begin{eqnarray*}
\rP_t(f^2)-\left(\rP_t f\right)^2 & \le & \frac{1-e^{-2t\kappa  }}{\kappa }\|f\|_{\textsc{lip}}^2,
\end{eqnarray*}
where the fraction is interpreted as $2t$ if $\kappa=0$. In particular, if $\kappa\ge 0$, then 
\begin{eqnarray*}
\rP_t(f^2)-\left(\rP_t f\right)^2 & \le & 2t\|f\|_{\textsc{lip}}^2.
\end{eqnarray*}
\end{lemma}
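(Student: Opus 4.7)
The proof is a standard instance of the \emph{interpolation along the semigroup} technique, adapted to handle both curvature notions in parallel. The plan is to express the left-hand side as an integral of the carr\'e du champ along the heat-flow, and then to exponentially damp the integrand using non-negative curvature.

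First, I set $\phi(s) := \rP_s((\rP_{t-s}f)^2)$ for $s\in[0,t]$, so that $\phi(0)=(\rP_t f)^2$ and $\phi(t)=\rP_t(f^2)$. The elementary identity $\rL(g^2)-2g\,\rL g = 2\Gamma(g,g)$ (which follows in one line from the discrete definition of $\Gamma$ together with $\rL=\rK-\mathrm{Id}$) yields
\begin{eqnarray*}
\phi'(s) & = & 2\,\rP_s\,\Gamma(\rP_{t-s}f,\rP_{t-s}f),
\end{eqnarray*}
so that, upon integration,
\begin{eqnarray*}
\rP_t(f^2)-(\rP_t f)^2 & = & 2\int_0^t \rP_s\,\Gamma(\rP_{t-s}f,\rP_{t-s}f)\,ds.
\end{eqnarray*}

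Second, I bound the integrand pointwise by a constant multiple of $e^{-2\kappa(t-s)}\|f\|_{\textsc{lip}}^2$, using either version of non-negative curvature. In the Bakry--\'Emery case, the sub-commutation (\ref{eq:subco}) applied with $u=t-s$ gives $\Gamma(\rP_{t-s}f,\rP_{t-s}f)\le e^{-2\kappa(t-s)}\rP_{t-s}\Gamma(f,f)$; combined with the trivial pointwise bound $\Gamma(f,f)\le \tfrac{1}{2}\|f\|_{\textsc{lip}}^2$ (direct from the definitions) and the semigroup property $\rP_s\rP_{t-s}=\rP_t$, this forces
\begin{eqnarray*}
\rP_s\,\Gamma(\rP_{t-s}f,\rP_{t-s}f) & \le & \tfrac{1}{2}\, e^{-2\kappa(t-s)}\|f\|_{\textsc{lip}}^2.
\end{eqnarray*}
In the Ollivier--Ricci case, the Kantorovich-dual contraction (\ref{eq:contract}) gives $\|\rP_{t-s}f\|_{\textsc{lip}}\le e^{-\kappa(t-s)}\|f\|_{\textsc{lip}}$, and applying the same pointwise inequality $\Gamma(g,g)\le\tfrac{1}{2}\|g\|_{\textsc{lip}}^2$ to $g=\rP_{t-s}f$ yields the identical bound on the integrand. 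Plugging this into the integral representation and evaluating $\int_0^t e^{-2\kappa(t-s)}\,ds$ produces the stated estimate; the $\kappa=0$ case follows by passage to the limit (or directly from $e^{-2\kappa(t-s)}\le 1$).

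The only conceptual point is that the Bakry--\'Emery and Ollivier--Ricci hypotheses, though quite different in origin, supply the same exponential decay for $\Gamma(\rP_u f,\rP_u f)$ via different routes: commutation of $\Gamma$ with $\rP_u$ in the first case, Lipschitz contraction of $\rP_u$ in the second. I do not foresee a genuine obstacle; the argument is short, and the only care required is to verify the chain-rule computation for $\phi'$ in the discrete setting, which is elementary.
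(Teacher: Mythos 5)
Your proof is correct and follows essentially the same semigroup-interpolation argument as the paper: the integral identity for $\rP_t(f^2)-(\rP_t f)^2$ (yours via $\phi(s)=\rP_s((\rP_{t-s}f)^2)$ is the paper's after the substitution $u=t-s$), followed by the same two-pronged bound on $\Gamma(\rP_u f,\rP_u f)$ from (\ref{eq:subco}) or (\ref{eq:contract}). One small inaccuracy: using the sharp pointwise bound $\Gamma(g,g)\le\tfrac12\|g\|_{\textsc{lip}}^2$ (the paper uses the looser $\Gamma(g,g)\le\|g\|_{\textsc{lip}}^2$) your computation actually yields $\tfrac{1-e^{-2\kappa t}}{2\kappa}\|f\|_{\textsc{lip}}^2$, a factor $2$ better than the stated estimate rather than equal to it, so it certainly implies the lemma but does not literally ``produce the stated estimate.''
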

\begin{proof}
Our starting point is the following well-known identity, which is easily checked by differentiating both sides with respect to $t$ (see, e.g. \cite[Problem 2.12.a]{Ramon}):
\begin{eqnarray*}
\rP_t(f^2)-\left(\rP_t f\right)^2 & = & 2\int_0^t\rP_{t-s}\Gamma\left(\rP_sf,\rP_sf\right){\rm d}s.
\end{eqnarray*}
This reduces our task to proving that  the integrand is at most $e^{-2\kappa s}\|f\|_{\textsc{lip}}^2$. In the Bakry-\'Emery case, we can use the sub-commutativity property (\ref{eq:subco}) to write
\begin{eqnarray*}
\Gamma\left(\rP_sf,\rP_sf\right)  & \le & e^{-2\kappa s}\rP_s\Gamma\left(f,f\right)\\
& \le & e^{-2\kappa s}\|f\|_{\textsc{lip}}^2,
\end{eqnarray*}
where the second line follows from the trivial bound $\Gamma(f,f)\le \|f\|_{\textsc{lip}}^2$ and the stochasticity of the operator $\rP_s$. On the other hand, in the Ollivier-Ricci case, we use  (\ref{eq:contract}) to write
\begin{eqnarray*}
\Gamma\left(\rP_sf,\rP_sf\right) 
& \le & \|\rP_sf\|_{\textsc{lip}}^2\\
& \le & e^{-2\kappa s} \|f\|_{\textsc{lip}}^2.
\end{eqnarray*}
In either case, we obtain $\Gamma\left(\rP_sf,\rP_sf\right)\le e^{-2\kappa s} \|f\|_{\textsc{lip}}^2$. Since this uniform bound is trivially preserved under the action of the stochastic operator $\rP_{t-s}$, the claim is established.
\end{proof}
\begin{remark}[Refinement]\label{rk:refinement2}The second part of the lemma uses the crude bound
\begin{eqnarray*}
\frac{1-e^{-2t\kappa  }}{\kappa } & \le & 2t,
\end{eqnarray*}
which has the advantage of suppressing the dependency in the curvature. However, in situations where a quantitative lower-bound on $\kappa$ is known, the alternative bound 
\begin{eqnarray*}
\frac{1-e^{-2t\kappa  }}{\kappa } & \le & \frac{1}{\kappa},
\end{eqnarray*}
might be preferable, and  leads to the refined condition $\tmix(\varepsilon)\gg \frac{\trel\log\Delta}{\sqrt{\kappa}}$ mentioned in Remark \ref{rk:refinement1}.
\end{remark}
Applying Lemma \ref{lm:concentration} to the observable $f(x)=\log\frac{\rP_t(o,x)}{\pi(x)}$ readily yields the varentropy estimate
\begin{eqnarray}
\label{key:k}
\forall t\ge 0,\quad \vkl^\star(t) & \le & 2t\max_{o\in\rX}\left\|\log \frac{\rP_t(o,\cdot)}{\pi(\cdot)}\right\|_{\textsc{lip}}^2.
\end{eqnarray}
This reduces our task to obtaining a gradient estimate on the logarithm of the heat-kernel. While such estimates have been explored in the setting of diffusions on manifolds (see, e.g., \cite{MR1885762}), we could not find any satisfactory  analogue on discrete state spaces. Here is what we can prove.
\begin{lemma}[Logarithmic gradient estimate]\label{lm:lip}If $P$ has symmetric support, then 
\begin{eqnarray*}
\left\|\log \frac{\rP_t(o,\cdot)}{\pi(\cdot)}\right\|_{\textsc{lip}}& \le & 3\left(1+\log\Delta\right),
\end{eqnarray*}
for any initial state $o\in\rX$ and any time $t\ge\diam(\rX)/4$.
\end{lemma}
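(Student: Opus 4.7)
The goal is to control $|\log h_t(x)-\log h_t(y)|$ for every pair of neighbors $x\sim y$, where $h_t(\cdot):=\rP_t(o,\cdot)/\pi(\cdot)$. Writing
\[
\log\frac{h_t(x)}{h_t(y)} \;=\; \bigl[\log\rP_t(o,x)-\log\rP_t(o,y)\bigr] + \bigl[\log\pi(y)-\log\pi(x)\bigr],
\]
the task naturally splits into an invariant-measure piece and a heat-kernel piece.

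The $\pi$-piece is essentially free. By stationarity $\pi=\pi\rK$, for any $x\sim y$,
\[
\pi(y) \;=\; \sum_z \pi(z)\rK(z,y) \;\ge\; \pi(x)\rK(x,y) \;\ge\; \pi(x)/\Delta,
\]
and the symmetry-of-support assumption guarantees the opposite inequality with $x$ and $y$ exchanged (via $\rK(y,x)\ge 1/\Delta$). Hence $|\log\pi(y)-\log\pi(x)|\le \log\Delta$, which accounts for one unit of $\log\Delta$ in the final bound and leaves a budget of about $2\log\Delta+3$ for the heat-kernel comparison.

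The heat-kernel piece is the heart of the matter. My plan is to start from the identity obtained by multiplying the heat equation $\partial_t\rP_t=\rK\rP_t-\rP_t$ by $e^t$ and integrating:
\[
e^t\rP_t(o,y)\;=\;\delta_{oy}+\int_0^t e^s\,(\rK\rP_s)(o,y)\,ds.
\]
For $x\sim y$, the one-step lower bound $(\rK\rP_s)(o,y)=\sum_z \rP_s(o,z)\rK(z,y)\ge \rK(x,y)\rP_s(o,x)\ge \rP_s(o,x)/\Delta$ turns this into the convolution inequality
\[
\rP_t(o,y)\;\ge\;\frac{1}{\Delta}\int_0^t e^{-(t-s)}\rP_s(o,x)\,ds, \qquad (\star)
\]
and the symmetric statement holds with $x,y$ interchanged. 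Thus $\rP_t(o,y)$ dominates an exponentially weighted time-average of $\rP_\cdot(o,x)$, paying only one factor of $1/\Delta$.

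The main obstacle is converting $(\star)$ into a pointwise comparison $\rP_t(o,y)\gtrsim \Delta^{-O(1)}\rP_t(o,x)$. The easy monotonicity estimate $\rP_s(o,x)\le e^{t-s}\rP_t(o,x)$ -- coming from the obvious bound $\partial_t\log\rP_t(o,x)\ge -1$ -- is of no help, since it upper bounds the right-hand side of $(\star)$ rather than lower bounding it. This is precisely where the diameter hypothesis $t\ge \diam(\rX)/4$ must enter: for $t$ comparable to (or larger than) a constant fraction of the diameter, the heat kernel $\rP_\cdot(o,x)$ has already had enough time to spread, and one expects that its exponentially-weighted average over $s\in[0,t]$ is comparable to $\rP_t(o,x)$ up to a factor depending only on $\Delta$. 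I expect the cleanest way to implement this is to iterate $(\star)$ with its $x\leftrightarrow y$ counterpart, producing a self-referential integral inequality for $\rP_t(o,y)$, and then use the diameter condition to preclude the rapid temporal oscillations that would otherwise defeat it. Combining the resulting ratio bound with the $\pi$-piece will yield the claimed Lipschitz estimate $3(1+\log\Delta)$, where one $\log\Delta$ comes from $\pi$ and the other $1+\log\Delta$ appears twice (once per direction of the two-sided comparison).
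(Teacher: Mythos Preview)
Your $\pi$-piece and the convolution inequality $(\star)$ are both correct and natural. The gap is entirely in step 4: you do not actually convert $(\star)$ into the pointwise comparison $\rP_t(o,y)\gtrsim\Delta^{-O(1)}\rP_t(o,x)$, and your proposed mechanism (iterate $(\star)$ into a self-referential inequality, then invoke the diameter bound to ``preclude rapid temporal oscillations'') is not a proof. Iterating once gives
\[
\rP_t(o,y)\ \ge\ \frac{1}{\Delta^2}\int_0^t (t-u)e^{-(t-u)}\rP_u(o,y)\,du,
\]
which compares $\rP_t(o,y)$ only to its own past and says nothing about $\rP_t(o,x)$. To use $(\star)$ directly one needs a lower bound on the ratio $\int_0^t e^{-(t-s)}\rP_s(o,x)\,ds\big/\rP_t(o,x)$; writing both sides in the Poisson representation $\rP_t(o,x)=\sum_k q(k)P^k(o,x)$ with $q(k)=e^{-t}t^k/k!$, this ratio equals $t\,\mathbb{E}_\nu[1/(K+1)]$, where $\nu(k)\propto q(k)P^k(o,x)$. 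By Jensen this is at least $t/(\mathbb{E}_\nu[K]+1)$, so everything hinges on an \emph{upper} bound for $\mathbb{E}_\nu[K]$. This is exactly the quantity the paper controls --- and it is the missing idea in your plan.

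The paper's route is somewhat different in packaging but identical at the core. It does not split into $\pi$ and heat-kernel pieces; instead it works directly with $f:=\rP_t(o,\cdot)/\pi(\cdot)$ and observes the algebraic identity
\[
\frac{t\,(P^\star f)(x)}{f(x)} \;=\; \frac{\sum_{k\ge 0} k\,q(k)\,P^k(o,x)}{\sum_{k\ge 0} q(k)\,P^k(o,x)}\;=\;\mathbb{E}_\nu[K].
\]
Jensen's inequality for $u\mapsto e^u$ then gives $\mathbb{E}_\nu[K]\le \log\mathbb{E}_\nu[e^K]\le t(e-1)-\log\rP_t(o,x)$, whence $(P^\star f)(x)/f(x)\le (e-1)+\tfrac{1}{t}\log\tfrac{1}{\rP_t(o,x)}$. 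The crucial point is the factor $1/t$: it is what allows one to feed in the crude uniform bound $\rP_t(o,x)\ge (4\Delta)^{-\diam(\rX)}$ (valid once $t\ge\diam(\rX)/4$) and have the diameter cancel against $t$, leaving a pure $O(\log\Delta)$. Without producing that $1/t$, any scheme that ultimately appeals to a uniform heat-kernel lower bound loses an unwanted factor of $\diam(\rX)$. Your outline does not explain how that factor would appear, and the iteration you propose does not generate it.
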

\begin{proof}
Fix an initial state $o\in \rX$ and a time $t> 0$, and define $f\colon\rX\to(0,\infty)$ by
\begin{eqnarray*}
f(x) & := & \frac{\rP_t(o,x)}{\pi(x)}.
\end{eqnarray*}
Let also $q$ denote the Poisson distribution with mean $t$, i.e.
\begin{eqnarray*}
\forall k\in\dZ_+,\quad q(k) & = & \frac{t^ke^{-t}}{k!}.
\end{eqnarray*}
It follows from the definitions of $P^\star$, $f$, and $\rP_t$ that for all $x,y\in\rX$,
\begin{eqnarray*}
\pi(x)P^\star(x,y)f(y) &  = &  P(y,x)\rP_t(o,y) \ = \  \sum_{k=0}^\infty q(k)P^k (o,y)P(y,x).
\end{eqnarray*}
Summing over all $y\in\rX$ and using the Poisson identity $tq(k)=(k+1)q(k+1)$, we arrive at
\begin{eqnarray*}
t\pi(x)(P^\star f)(x) & = & \sum_{k=0}^\infty k q(k)P^{k}(o,x).
\end{eqnarray*}
We may now divide both sides by $\rP_t(o,x)=\sum_{k}q(k)P^k(o,x)$ to obtain
\begin{eqnarray*}
\frac{t(P^\star f)(x)}{f(x)} & = & \frac{\sum_{k=0}^\infty k q(k)P^{k}(o,x)}{\sum_{k=0}^\infty  q(k)P^{k}(o,x)}.
\end{eqnarray*}
By Jensen's inequality, we deduce from this expression that  
\begin{eqnarray*}
\frac{t(P^\star f)(x)}{f(x)} & \le & \log\left(\frac{\sum_{k=0}^\infty e^{ k} q(k)P^{k}(o,x)}{\sum_{k=0}^\infty  q(k)P^{k}(o,x)}\right)\\
& \le & \log\left(\frac{e^{t(e-1)}}{\rP_t(o,x)}\right),
\end{eqnarray*}
where the second inequality  simply uses the bound $P^{k}(o,x)\le 1$ in the numerator. In other words, 
\begin{eqnarray*}
\sum_{y\in\rX}P^\star(x,y)\frac{f(y)}{f(x)} & \le & e-1+\frac{1}{t}\log\frac{1}{\rP_t(o,x)}.
\end{eqnarray*}
Using the notation $(\Delta(Q))^{-1}$ for the smallest non-zero entry of a matrix $Q$, this readily implies  
\begin{eqnarray*}
\max_{y\sim x}\frac{f(y)}{f(x)} & \le & \Delta(P^\star)\left(e-1+\frac{\log\Delta(\rP_t)}{t}\right).
\end{eqnarray*}
Taking logarithms, we obtain the logarithmic gradient estimate
\begin{eqnarray}
\label{key}
\|\log f\|_{\textsc{lip}} & \le & \log\Delta(P^\star)+\log \left(e-1+\frac{\log\Delta(\rP_t)}{t}\right).
\end{eqnarray}
It now only  remains to bound $\Delta(P^\star)$ and $\Delta(\rP_t)$. For the former, we simply remark that 
\begin{eqnarray}
\label{Delta1}
\Delta(P^\star) & \le & \Delta^2(P)=\Delta^2,
\end{eqnarray}
as is easily deduced from the identity  $P^\star(x,y)P^\star(y,x)  =  P(x,y)P(y,x)$ and the symmetry of the support of $P$. To estimate $\Delta(\rP_t)$,  we consider the $\frac 34-$idle transition matrix $\widehat{P}=\frac{3}{4}{\rm Id}+\frac{1}{4}P$. Note that for $k=\diam(\rX)$, all entries of $\widehat{P}^k$ are at least $\left(\frac 1{4\Delta}\right)^{k}$. Consequently, for every $x,y\in\rX$,  
\begin{eqnarray*}
k\ge \diam(\rX) & \Longrightarrow &  \widehat{P}^k\left(x,y\right)   \ge  \left(\frac{1}{4\Delta}\right)^{\diam(\rX)}.
\end{eqnarray*}
Multiplying by $e^{-4t}(4t)^k/k!$ and summing over all $k\ge\diam(\rX)$, we obtain
\begin{eqnarray*}
\rP_t(x,y) & \ge & p\left(\frac{1}{4\Delta}\right)^{\diam(\rX)},
\end{eqnarray*}
where $p$ denotes the probability that a Poisson variable with mean $4t$ is at least $\diam(\rX)$. Choosing $t\ge \frac{\diam(\rX)}{4}$ makes this probability at least $1/2$, and we deduce that
\begin{eqnarray}
\label{Delta2}
\Delta(\rP_t) & \le & 2(4\Delta)^{\diam(\rX)}.
\end{eqnarray}
Inserting the estimates (\ref{Delta1})-(\ref{Delta2}) into (\ref{key}) easily yields the claim.
\end{proof}

Our last ingredient is the following elementary diameter bound, in which the fact that $\varepsilon$ may be taken arbitrarily close to $1$ is crucial.  
\begin{lemma}[Diameter bound]\label{lm:diam}For any $\varepsilon\in(0,1)$, we have
\begin{eqnarray*}
\diam(\rX) & \le & 2\tmix(\varepsilon)+\sqrt{\frac{8\tmix(\varepsilon)}{1-\varepsilon}}+\sqrt{\frac{8\trel}{1-\varepsilon}}.
\end{eqnarray*}
\end{lemma}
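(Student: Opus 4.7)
My plan is to pick $x, y \in \rX$ realizing the diameter, $\dd(x, y) = D := \diam(\rX)$, and to study the two $1$-Lipschitz functions $f(z) := \dd(x, z)$ and $f'(z) := \dd(y, z)$. The triangle inequality gives $f(z) + f'(z) \ge D$ pointwise, hence $D \le \mu_f + \mu_{f'}$ with $\mu_f := \EE_\pi[f]$ and $\mu_{f'} := \EE_\pi[f']$; so it suffices to bound each $\pi$-mean separately.

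Setting $t := \tmix(\varepsilon)$, I would assemble two tail bounds for $f$. For the upper tail under the heat kernel: the continuous-time chain performs $N_t \sim \mathrm{Poisson}(t)$ jumps in time $t$ and each jump changes $f$ by at most $1$, so $f(Z_t) \le N_t$ stochastically under $\rP_t(x, \cdot)$, and Chebyshev (using $\VV(N_t) = t$) gives $\PP_{\rP_t(x,\cdot)}(f > t + \alpha) \le t/\alpha^2$ for every $\alpha > 0$; the $\varepsilon$-mixing hypothesis then transfers this to $\pi(f > t + \alpha) \le t/\alpha^2 + \varepsilon$. For the lower tail under $\pi$: since $\|f\|_{\textsc{lip}} \le 1$, the carré du champ satisfies $\Gamma(f, f) \le 1/2$ pointwise, so the Poincaré inequality (\ref{def:poincare}) yields $\VV_\pi(f) \le \trel/2$, and Chebyshev then gives $\pi(f < \mu_f - \beta) \le \trel/(2\beta^2)$ for every $\beta > 0$.

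The two estimates are glued by a pigeonhole on $\pi$: whenever $t/\alpha^2 + \varepsilon + \trel/(2\beta^2) < 1$, the events $\{f \le t + \alpha\}$ and $\{f \ge \mu_f - \beta\}$ have $\pi$-probabilities summing to more than $1$, so their intersection is nonempty, which forces $\mu_f \le t + \alpha + \beta$. Running the symmetric argument starting from $y$ yields $\mu_{f'} \le t + \alpha + \beta$, and summing produces $D \le 2t + 2\alpha + 2\beta$ under the same constraint. Choosing $\alpha^2 = 2t/(1-\varepsilon)$ and $\beta^2 = \trel/(1-\varepsilon)$ saturates the constraint on its boundary (so an arbitrarily small slack and a limit argument are needed) and delivers $D \le 2t + \sqrt{8t/(1-\varepsilon)} + \sqrt{4\trel/(1-\varepsilon)}$, a statement slightly stronger than the claim.

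The only real trap to sidestep is the naive estimate $|\mu_f - \EE_{\rP_t(x,\cdot)}[f]| \le D \cdot \|\rP_t(x,\cdot) - \pi\|_{\textsc{tv}}$, which is circular because $f$ itself has range of order $D$, the quantity we are trying to bound. The point of the argument is to use the total-variation guarantee only on masses of events, while exploiting \emph{dimension-free} concentration separately under the heat kernel (Poisson) and under $\pi$ (Poincaré); everything else is routine Chebyshev bookkeeping.
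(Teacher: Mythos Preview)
Your proposal is correct and follows essentially the same route as the paper: Poisson domination plus Chebyshev controls the upper tail of $\dd(o,\cdot)$ under $\rP_t(o,\cdot)$, this is transferred to $\pi$ via the total-variation bound at $t=\tmix(\varepsilon)$, Poincar\'e plus Chebyshev controls the deviation of $\dd(o,\cdot)$ from its $\pi$-mean, and an intersection (pigeonhole) argument combines the two; the final doubling comes from applying this to both endpoints and using $\dd(o,o')\le\EE_\pi[\dd(o,U)]+\EE_\pi[\dd(o',U)]$. Your observation that $\Gamma(f,f)\le\tfrac12$ (rather than $\le 1$) yields the slightly sharper constant $\sqrt{4\trel/(1-\varepsilon)}$ in place of the paper's $\sqrt{8\trel/(1-\varepsilon)}$, and the limit argument you flag at the boundary of the constraint is indeed needed but entirely routine.
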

\begin{proof}
Fix $\varepsilon\in(0,1)$, and set $t=\tmix(\varepsilon)$. By definition, we have
\begin{eqnarray}
\label{pip}
 \rP_t(o,A) & \le & \pi(A)+\varepsilon,
 \end{eqnarray} 
for any initial state $o\in\rX$ and any event $A\subseteq\rX$. Let us consider the specific choice 
\begin{eqnarray*}
A & := & \left\{x\in\rX\colon \dd(o,x)\le t+\sqrt{\frac{2t}{1-\varepsilon}}\right\}.
\end{eqnarray*}
Note that by Chebychev's inequality, 
\begin{eqnarray*}
\rP_t(o,A) &  > & \frac{1+\varepsilon}{2}, 
\end{eqnarray*}
because  the distance to the origin at time $t$ is stochastically dominated by a Poisson random variable with mean $t$. In view of (\ref{pip}), we deduce that $\pi(A)> (1-\varepsilon)/2$, i.e. 
\begin{eqnarray}
\label{vareps}
\PP\left(\dd(o,U)  \le t+\sqrt{\frac{2t}{1-\varepsilon}}\right) & > & \frac{1-\varepsilon}{2}, 
\end{eqnarray}
where $U$ here denotes a random variable with distribution $\pi$. 
On the other hand, the function $f\colon x\mapsto \dd(o,x)$ is trivially Lipschitz on $\rX$, so the Poincar\'e inequality (\ref{def:poincare}) implies
\begin{eqnarray*}
{\rm Var}\left(\dd(o,U)\right) & \le & \trel. 
\end{eqnarray*}
By Chebychev's inequality again, we deduce that
\begin{eqnarray}
\label{vareps2}
\PP\left(\dd(o,U)-\EE[\dd(o,U)]\in\left[-\sqrt{\frac{2\trel}{1-\varepsilon}},\sqrt{\frac{2\trel}{1-\varepsilon}}\right]\right) & \ge &  \frac{1+\varepsilon}{2}.
\end{eqnarray}
Thus, the events in (\ref{vareps})-(\ref{vareps2}) must intersect. In other words, for all $o\in\rX$,
\begin{eqnarray*}
\EE\left[\dd(o,U)\right] & \le & t+\sqrt{\frac{2t}{1-\varepsilon}}+\sqrt{\frac{2\trel}{1-\varepsilon}}.
\end{eqnarray*}
The triangle inequality $\dd(o,o')\le \EE[\dd(o,U)+\dd(o',U)]$ completes the proof.
\end{proof}
We may at last establish Theorem \ref{th:varentropy}.
\begin{proof}[Proof of Theorem \ref{th:varentropy}]
Fix $\varepsilon\in(0,1)$ and suppose that $\sqrt{\trel}\ll {\tmix(\varepsilon)}$. 
By, Lemma \ref{lm:diam}, we have
\begin{eqnarray*}
\diam(\rX) & \le & (2+o(1))\tmix(\varepsilon).
\end{eqnarray*}
In particular, the condition $\tmix(\varepsilon)\ge\diam(\rX)/4$ is eventually satisfied. Consequently, Lemma \ref{lm:lip} applies with $t=\tmix(\varepsilon)$. Combining this with the inequality (\ref{key:k}), we obtain
\begin{eqnarray*}
\vkl^\star(\tmix(\varepsilon)) & \le & 18\tmix(\varepsilon)\left(1+\log \Delta\right)^2,
\end{eqnarray*}
for $n$ sufficiently large. Since  $\Delta \ge 2$, we have $18(1+\log \Delta)^2\lesssim\log^2\Delta$, as desired.
\end{proof}
\bibliographystyle{plain}
\bibliography{draft}
\end{document}